\documentclass[11pt,reqno]{amsart}
\usepackage[T1]{fontenc}
\usepackage{lmodern}
\usepackage{amsmath,amssymb,amscd,mathrsfs}
\usepackage[margin=1in]{geometry}
\usepackage{booktabs}
\usepackage{enumitem}
\usepackage{microtype}
\usepackage[colorlinks=true,citecolor=blue,linkcolor=blue,urlcolor=blue]{hyperref}
\allowdisplaybreaks

\newtheorem{theorem}{Theorem}[section]
\newtheorem{lemma}[theorem]{Lemma}
\newtheorem{proposition}[theorem]{Proposition}
\newtheorem{corollary}[theorem]{Corollary}
\theoremstyle{definition}

\theoremstyle{remark}

\newcommand{\bR}{\mathbb R}
\newcommand{\bZ}{\mathbb Z}
\newcommand{\F}{F}
\newcommand{\f}{f}
\newcommand{\aff}{\operatorname{aff}}
\newcommand{\spanR}{\operatorname{span}}
\newcommand{\dimn}{\operatorname{dim}}
\newcommand{\norm}[1]{\left\lVert #1\right\rVert}

\title[Johnson optimality and uniqueness]{Johnson optimality and uniqueness for distance ratio $1:\sqrt{2}$}

\author{Bryan Bu}
\email{bryanbu@umass.edu}

\author{Chin-Yen Lee}
\email{109281001@cc.ncu.edu.tw}

\author{Yu-Hang Sheng}
\email{joy09272002@gmail.com}

\author{Meng-Tsung Tsai}
\email{mttsai@as.edu.tw}

\author{Hyunjun Yi}
\email{hjyi2027@gmail.com}

\author{Wei-Hsuan Yu}
\email{whyu@math.ncu.edu.tw}

\begin{document}
\begin{abstract}
We prove that a two-distance set in $\bR^d$ with distance ratio
$1:\sqrt{2}$ has at most $d(d+1)/2$ points for every $d\ge9$.
Equality in full affine dimension characterizes the Johnson construction;
for $d\ge10$, the full-dimensional condition is automatic.
The proof separates configurations according to the connectivity of the
short-distance graph. The connected case uses root lattices and
intersection theorems, while the disconnected case uses an orthogonal
decomposition into component direction spaces and the span of their
center differences. We also prove a relative bound for Euclidean
two-distance sets and show that equality forces sphericity.
Together with the Larman--Rogers--Seidel theorem, these results give
maximum cardinality $d(d+1)/2$ for $9\le d\le21$ and $275$ for $d=22$,
without exhaustive enumeration.
\end{abstract}
\maketitle

\section{Introduction and notation}\label{sec:intro}

A finite set in Euclidean space is called a two-distance set if
the distances between distinct points take exactly two values.
A basic problem is to determine the largest possible cardinality
of such a set in $\bR^d$. Write
\begin{align*}
  M(d):=\max\{|X|:X\subset\bR^d
  \text{ is a finite two-distance set}\}.
\end{align*}

The theorem of Larman, Rogers, and Seidel
\cite{LarmanRogersSeidel} restricts the possible distance ratios
of sufficiently large two-distance sets.
If $|X|>2d+3$ and the two squared distances are $\alpha<\beta$,
then
\begin{align*}
  \frac{\alpha}{\beta}=\frac{\kappa-1}{\kappa},
  \quad
  2\le\kappa\le
  \left\lfloor\frac{1+\sqrt{2d}}2\right\rfloor,
\end{align*}
where $\kappa$ is an integer.
We call $\kappa$ the LRS parameter.
For each integer $\kappa\ge2$, let $M_\kappa(d)$ denote the
largest cardinality of a finite two-distance set in $\bR^d$ with
distance ratio $\sqrt{\kappa-1}:\sqrt{\kappa}$.
Neither definition requires full affine dimension.

Put $\F(r)=r(r+1)/2$ and $\f(r)=r(r+3)/2$.
The Johnson construction
\begin{align*}
  J(d+1,2)
  :=\{e_i+e_j:1\le i<j\le d+1\}
\end{align*}
lies in the $d$-dimensional affine hyperplane
$\{x\in\bR^{d+1}:\sum_i x_i=2\}$.
For $d\ge3$, it has affine dimension $d$, squared distances
$2$ and $4$, and $\F(d)$ points.
Thus $M(d)\ge M_2(d)\ge\F(d)$ for $d\ge3$.
For $d\ge9$, we have $\F(d)>2d+3$, so the LRS theorem applies
to every maximum two-distance set in $\bR^d$.

For $9\le d\le12$, the LRS theorem leaves only $\kappa=2$.
For $13\le d\le22$, it allows $\kappa=2$ or $\kappa=3$.
The relative bound developed in Section~\ref{sec:relative} gives
\begin{align*}
  M_3(d)\le\frac{24(d+1)}{24-d}
  \quad (d<24).
\end{align*}
For $9\le d\le21$, this bound is strictly smaller than $\F(d)$.
Consequently, every maximum two-distance set in these dimensions
has $\kappa=2$.

The situation changes in dimension $22$.
The Johnson construction has $\F(22)=253$ points, whereas the
McLaughlin construction gives a spherical two-distance set with
$275$ points and $\kappa=3$; see \cite[Example~4.10]{DelsarteGoethalsSeidel}.
The relative bound gives $M_3(22)\le276$.
We show that equality in the relative bound forces sphericity.
Since a spherical two-distance set in $\bR^{22}$ has at most
$\f(22)=275$ points, the value $276$ is impossible.
It follows that $M_3(22)=275$.
In dimension $23$, Ge, Koolen, and Munemasa~\cite{GeKoolenMunemasa}
constructed a two-distance set with $277$ points and distances
$2$ and $\sqrt{6}$.
This construction has LRS parameter $\kappa=3$ and yields
$M(23)\ge M_3(23)\ge277>\F(23)=276$.
Thus the Johnson construction is not optimal in dimension $23$
when the distance ratio is unrestricted.
Our main geometric result determines $M_2(d)$ in every dimension
$d\ge9$.

\begin{theorem}\label{thm:main}
Let $d\ge9$, and let $X\subset\bR^d$ be a two-distance set with
distance ratio $1:\sqrt2$. Then
\begin{align*}
  |X|\le\F(d)=\binom{d+1}{2}.
\end{align*}
Equality is attained by the Johnson construction.
If equality holds and $\dimn\aff(X)=d$, then $X$ is similar to
$J(d+1,2)$.
For $d\ge10$, equality automatically implies
$\dimn\aff(X)=d$, so the maximum configuration is unique up to
similarity.
\end{theorem}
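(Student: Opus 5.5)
Normalize so the squared distances are $2$ and $4$. Let $G$ be the short-distance graph on $X$, with $x\sim y$ iff $\norm{x-y}^2=2$. The plan follows the dichotomy announced in the abstract: first treat the case where $G$ is connected, then the case where it is disconnected, and in each case bound $|X|$ by $\F(d)$ and analyze equality.

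\emph{Connected case.} Fix $x_0\in X$. Each vector $x-x_0$ with $x\in X$ is an integer combination of edge vectors $y-z$, $y\sim z$. Inner products of edge vectors are determined by the squared distances $2$ and $4$, so they are integers. Hence $L=\spanR_{\bZ}\{x-x_0\}$ is an integral lattice generated by norm-$2$ vectors, i.e.\ a root lattice, and it is a direct sum of $A_n,D_n,E_6,E_7,E_8$. The set $X-x_0$ consists of lattice vectors of norm $2$ or $4$ whose pairwise differences also have norm $2$ or $4$. I would embed each irreducible component in a standard coordinate model, with $A_n,D_n\subset\bZ^{n+1}$ or $\bZ^n$ and the exceptional lattices of bounded rank. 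In these models the points of $X$ become vectors with small integer coordinates. Difference norms in $\{2,4\}$ then translate into intersection conditions on supports: pairs of supports with prescribed intersection sizes, together with sign patterns. I would bound such families by a Frankl--Wilson or Ray-Chaudhuri--Wilson type intersection theorem, expecting the bound $\binom{n+1}{2}$ with equality only for the family of all $2$-subsets, which is the Johnson set. The exceptional components have rank at most $8$ and can be handled by direct counting. For several components, the points spread across orthogonal summands, and a convexity argument bounds the total by $\F$ of the total rank. That rank is at most $d$.

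\emph{Disconnected case.} Let the components be $C_1,\dots,C_m$, with $d_i=\dimn\aff(C_i)$ and centroids $c_i$. Two points in different components are at squared distance $4$. Then for $x,x'\in C_i$ and $y\in C_j$, the inner product $\langle x-x',y-c_j\rangle$ vanishes, since expanding $\norm{x-y}^2=\norm{x'-y}^2=4$ shows that $\langle x-x',y\rangle$ does not depend on $y\in C_j$. Hence the direction spaces $V_i$ are mutually orthogonal. The same computation shows they are orthogonal to $W=\spanR\{c_i-c_j\}$. This gives
\[
\sum_i d_i+\dimn W\le d,\qquad \dimn W\le m-1.
\]
Each component is connected, so the connected case gives $|C_i|\le\F(d_i)+\varepsilon$. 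The correction $\varepsilon$ accounts for small $d_i$, for example a single point with $d_i=0$. The centers form a set whose pairwise squared distances are constrained, and I expect them to be nearly a simplex. Summing, $\sum_i\F(d_i)$ plus the contribution from at most $d-\sum_i d_i+1$ components is strictly less than $\F(d)$ for $d\ge 9$, by superadditivity $\F(a)+\F(b)<\F(a+b)$ when $a,b\ge1$. The losses are then absorbed by the dimensions spent on $W$.

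\emph{Equality.} In the disconnected case the inequality is strict for $d\ge10$. For $d=9$, a borderline disconnected configuration of lower affine dimension may achieve $45$ points, which is why the statement requires full dimension there. In the connected case, equality forces the root lattice to be $A_d$ with $X$ the full Johnson family, so $X$ is similar to $J(d+1,2)$.

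The main obstacle is the connected case: turning the root-lattice description into a clean intersection problem whose extremal bound is exactly $\binom{d+1}{2}$, and handling the $D_n$ and $E_8$ components, where extra sign patterns might allow competing configurations in small ranks.
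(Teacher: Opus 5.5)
Your overall architecture matches the paper's: root lattices when the short-distance graph is connected, and an orthogonal splitting into component directions plus center differences otherwise. However, the way you propagate bounds breaks in low dimension.

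\textbf{Low affine dimension and the connected case.} You implicitly use $|X|\le\F(\operatorname{rank}L)$ followed by $\operatorname{rank}L\le d$. You also plan to feed per-summand, and later per-component, $\F$-bounds into a convexity or superadditivity step. Such bounds are false below rank $9$. The set $J(9,2)\cup\{\tfrac13\mathbf{1}-e_k:1\le k\le 9\}$ has squared distances $2,4$, affine dimension $8$, a connected short-distance graph and lattice $E_8$, yet it has $45>\F(8)=36$ points.

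The paper avoids this by splitting on $r=\dimn\aff(X)$. For $r\le 8$ the Euclidean absolute bound gives $|X|\le\binom{r+2}{2}\le 45\le\F(d)$, and the structural analysis is run only for $r\ge 9$. The same split proves the last assertion of the theorem, which your proposal does not address: for $d\ge10$, the facts $45<\F(d)$ and $\F(r)<\F(d)$ for $9\le r<d$ force $r=d$. It also shows that the $45$-point obstruction at $d=9$ is a connected configuration of affine dimension $8$, not a disconnected one.

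Within the connected case, three further steps are missing.
\begin{itemize}
\item A reducible lattice does not split $X$ into per-summand pieces, since points are pairs $(u,v)$. The paper needs a grid structure: multiple-row coordinates form a regular simplex of squared edge $2$, and singleton rows lie on a sphere of squared radius $1+1/\ell$. This gives $|X|\le \ell k+S(a,\ell)+S(b,k)\le\F(d)-(d-3)$.
\item In type $A$, Ray-Chaudhuri--Wilson gives $\binom n2$ but no strictness for weight $k\ge3$. The paper uses Ahlswede--Khachatrian to get $\binom n2-2$.
\item Type $D$, which you leave open, needs the private-coordinate argument together with Katona's shadow theorem and Erd\H{o}s--Ko--Rado.
\end{itemize}

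\textbf{The disconnected case.} Each component lies on a sphere centered at the common projection $c_i$ of all outside points onto $\aff(X_i)$. You should use these circumcenters rather than centroids; otherwise $V_i\perp W$, and even your dimension inequality, can fail. Per component you therefore have a spherical bound, not the connected one. For $d_i\le 8$ only the absolute bound $\f(d_i)=\F(d_i)+d_i$ is available, and it is attained at $d_i=6$ by the Schl\"afli configuration, whose distance ratio is $1:\sqrt2$. So your $\varepsilon$ is not small.

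More seriously, ``at most $d-\sum d_i+1$ components'' does not follow from $\dimn W\le m-1$. It needs $\dimn W\ge m-1$, i.e.\ affinely independent centers. This fails whenever two components have squared circumradius $2$, for example two orthogonal copies of $\{e_1\pm e_2,e_3\pm e_4,e_1+e_3\}$ with common center $0$.

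The missing idea is the center form. The relation $\norm{c_i-c_j}^2=4-t_i-t_j$ gives $\norm{\sum z_ic_i}^2=\sum(2-t_i)z_i^2$ on $\sum z_i=0$, and the rank of this form is $\dimn W$. Thus $W$ degenerates only through components with $t_i\ge2$. Exactly for those components, Musin's bound for nonnegative inner-product sum improves $\f(d_i)$ to $\F(d_i)$.

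Two borderline cases still need extra input.
\begin{itemize}
\item A singleton at the center of a component with $t_1>2$ requires the sharper connected bound $\F(d)-2$ for spherical sets of squared circumradius at least $2$.
\item At $d=9$, a singleton plus an $8$-dimensional spherical component gives exactly $\f(8)+1=45$ from the absolute bound. The full-dimensional equality statement at $d=9$ therefore requires showing that such a spherical set has at most $43$ points. You leave this open by claiming strictness only for $d\ge10$.
\end{itemize}
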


The full affine-dimension assumption in dimension $9$ is necessary:
Lison\v{e}k's $45$-point configuration has affine dimension $8$
and can also be embedded in $\bR^9$; see \cite{Lisonek,BannaiEtAl}.
Combining Theorem~\ref{thm:main} with the LRS theorem and the
relative bound yields the following result.

\begin{corollary}\label{cor:maximum}
The maximum cardinality of a two-distance set in $\bR^d$ is
\begin{align*}
  M(d)=
  \begin{cases}
    \displaystyle\binom{d+1}{2}, & 9\le d\le21,\\
    275, & d=22.
  \end{cases}
\end{align*}
For $10\le d\le21$, every maximum configuration is similar to
the Johnson construction.
\end{corollary}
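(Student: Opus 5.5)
The statement to prove is Corollary~\ref{cor:maximum}. The plan is to combine three ingredients: the LRS theorem, the relative bound $M_3(d)\le 24(d+1)/(24-d)$ of Section~\ref{sec:relative} together with its equality case (sphericity), and Theorem~\ref{thm:main}.

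Fix $9\le d\le 22$ and let $X$ be a maximum two-distance set in $\bR^d$.

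\textbf{Step 1: reduce to $\kappa\in\{2,3\}$.} Since $M(d)\ge \F(d)$ and $\F(d)>2d+3$ for $d\ge9$, we have $|X|>2d+3$, so the LRS theorem applies. The integer $\kappa$ satisfies $2\le\kappa\le\lfloor(1+\sqrt{2d})/2\rfloor$. For $d\le 12$ we have $\sqrt{2d}\le\sqrt{24}<5$, so $\kappa=2$. For $13\le d\le 22$ we have $\sqrt{2d}\le\sqrt{44}<7$, so $\kappa\le3$. Hence
\[
M(d)=\max\{M_2(d),M_3(d)\},
\]
where the $M_3$ term appears only for $d\ge13$.

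\textbf{Step 2: rule out $\kappa=3$ for $13\le d\le21$.} We check that $24(d+1)/(24-d)<\F(d)$. This is equivalent to
\[
48<d(24-d),
\]
after cancelling $d+1$ and clearing denominators. The function $d(24-d)$ is concave, so it suffices to check the endpoints: $d(24-d)=143$ at $d=13$ and $63$ at $d=21$. Both exceed $48$, so the inequality holds on the whole range. Therefore $M_3(d)<\F(d)\le M(d)$, and $M(d)=M_2(d)=\F(d)$ by Theorem~\ref{thm:main}.

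\textbf{Uniqueness for $10\le d\le21$.} Every maximum configuration has $\kappa=2$, since $\kappa=3$ sets are too small. It has $\F(d)$ points and $d\ge10$, so Theorem~\ref{thm:main} shows it is similar to $J(d+1,2)$. For $d=9$ only the value $M(9)=\F(9)$ is claimed, which also follows from Theorem~\ref{thm:main}.

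\textbf{Step 3: dimension $22$.} Here $M_2(22)=\F(22)=253$ by Theorem~\ref{thm:main}. The relative bound gives $M_3(22)\le 24\cdot23/2=276$. Suppose a $\kappa=3$ set had $276$ points. Equality in the relative bound forces it to be spherical. A spherical two-distance set in $\bR^{22}$ has at most $\f(22)=275$ points, which is a contradiction. Hence $M_3(22)\le275$. The McLaughlin configuration has $\kappa=3$ and $275$ points, so $M_3(22)=275$. Therefore
\[
M(22)=\max\{253,275\}=275.
\]

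\textbf{Main obstacle.} The proof is essentially bookkeeping, with one point that needs care in Step 3. The equality case of the relative bound must apply to any $276$-point set with $\kappa=3$ in $\bR^{22}$, including one whose affine span is a proper subspace. If the span has dimension $d'<22$, the set lies in $\bR^{d'}$. Applying the relative bound there gives
\[
24(d'+1)/(24-d')\le 24\cdot22/3=176<276,
\]
since the bound is increasing in $d'$. So such a set cannot exist, and we may assume full dimension, where the equality argument applies.
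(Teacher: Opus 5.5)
Your proposal is correct and follows the paper's proof essentially step for step. LRS reduces to $\kappa\in\{2,3\}$; the inequality $d(24-d)>48$ excludes $\kappa=3$ for $13\le d\le21$, so Theorem~\ref{thm:main} gives both the value and the uniqueness; and at $d=22$ you inline the argument of Corollary~\ref{cor:m3-d22}. Your extra treatment of a proper affine span in Step 3 is valid but unnecessary, since the equality clause of Proposition~\ref{prop:relative} already states that $X$ has affine dimension $d$.
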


Table~\ref{tab:maxima} records these values and the LRS parameter
of a maximum configuration.
In dimension $22$, the spherical $275$-point configuration is
unique up to similarity; see \cite[Remark~4]{Musin}.
The classification of all Euclidean configurations attaining
$275$ points is not established here.

\begin{table}[ht]
\centering
\begin{tabular}{crrl}
\toprule
$d$ & $M(d)$ & Extremal $\kappa$
    & Uniqueness \\
\midrule
 9  &  45 & 2 & No \\
10 &  55 & 2 & Yes \\
11  &  66 & 2 & Yes \\
12  &  78 & 2 & Yes \\
13  &  91 & 2 & Yes \\
14 & 105 & 2 & Yes \\
15  & 120 & 2 & Yes \\
16  & 136 & 2 & Yes \\
17 & 153 & 2 & Yes \\
18 & 171 & 2 & Yes \\
19 & 190 & 2 & Yes \\
20 & 210 & 2 & Yes \\
21 & 231 & 2 & Yes \\
22 & 275 & 3 & Not established here \\
\bottomrule
\end{tabular}
\caption{Maximum cardinalities and uniqueness for two-distance
sets in $\bR^d$. Uniqueness is understood up to similarity and concerns configurations
attaining $M(d)$. For $10\le d\le21$, the unique extremal
configuration is $J(d+1,2)$. In dimension $9$, both $J(10,2)$
and Lison\v{e}k's configuration attain the maximum;
the latter has affine dimension $8$.}
\label{tab:maxima}
\end{table}

The spherical problem has been studied extensively.
Musin~\cite{Musin} proved that the maximum cardinality of a
spherical two-distance set in $\bR^d$ is $\F(d)$ for
$7\le d\le39$, except in dimensions $22$ and $23$.
Further bounds were obtained by Barg and Yu~\cite{BargYu}.
Glazyrin and Yu~\cite{GlazyrinYu} established the same spherical
maximum for $d\ge7$, with possible exceptions among dimensions
of the form $d=(2k+1)^2-3$.

Theorem~\ref{thm:main} determines the fixed-ratio maximum
$M_2(d)$ for every $d\ge9$ among arbitrary Euclidean
configurations and characterizes equality in full affine
dimension. It also gives uniqueness up to similarity for
$d\ge10$. Corollary~\ref{cor:maximum} determines the unrestricted
Euclidean maximum for $9\le d\le22$, extending the corresponding
spherical cardinality results to configurations that need not
lie on a sphere.

Computational enumeration and classification have also played
an important role, particularly in low dimensions; see
Lison\v{e}k~\cite{Lisonek}.
Throughout this paper, a maximum configuration means one
attaining the largest cardinality in the specified class.
This differs from an inclusion-maximal configuration, to which
no further point can be added while remaining in that class.
Our results concern maximum cardinality and the configurations
attaining it.

For $\kappa=2$, normalize the squared distances to $2$
and $4$, and associate a graph with $X$: its vertices are the
points of $X$, and two vertices are adjacent when their squared
distance is $2$. We call this the short-distance graph.
Our proof separates according to whether this graph is connected.

In the connected case, translating one point to the origin
places $X$ in an even integral lattice generated by vectors
of squared norm $2$. The classification of simply-laced root
systems reduces the problem to the irreducible cases of types
$A$ and $D$, together with a reducible case.
The first two cases are handled by intersection theorems for
finite sets. In the reducible case, an orthogonal decomposition
and spherical bounds give a strict upper bound below $\F(d)$.

In the disconnected case, points in different components are
all at squared distance $4$.
Each component lies on a sphere in its affine hull.
We show that the direction spaces of the components are
mutually orthogonal and are also perpendicular to the
differences between their centers.
This gives an orthogonal decomposition of the affine hull.
The resulting quadratic form on the center coefficients
reduces the cardinality estimate to three cases, according
to the signs of its parameters.
Each case gives $|X|<\F(d)$ when $\dimn\aff(X)=d\ge9$.

For $\kappa=3$, we use a relative bound depending on
the dimension and the distance ratio.
A spectral argument for the distance matrix proves this bound
and shows that equality forces the configuration to be spherical.
This supplies the remaining upper bounds needed for
Corollary~\ref{cor:maximum}.

The paper is organized as follows.
We first record the spherical estimates used throughout the proof.
Section~\ref{sec:connected} treats connected short-distance graphs,
and Section~\ref{sec:disconnected} treats disconnected ones.
Section~\ref{sec:relative} establishes the relative bound and
determines $M_3(22)$. Section~\ref{sec:completion} combines these
results to prove Theorem~\ref{thm:main} and
Corollary~\ref{cor:maximum}.

For a configuration $X$, write $\aff(X)$ for its affine hull.
We write $[n]=\{1,\ldots,n\}$, and use $\mathbf{1}$ and $J$
for the all-ones vector and matrix of the appropriate sizes.
If $X_i$ is a connected component of its short-distance graph,
put $V_i=\spanR(X_i-X_i)$ and $r_i=\dimn V_i$.
When considering subsets, we allow one of the two prescribed
distances to be absent.

\subsection{Spherical estimates}

\begin{lemma}\label{lem:sphere}
Let $Z$ lie on a sphere of positive squared radius $t$ in an $r$-dimensional Euclidean space, and suppose its pairwise squared distances belong to $\{2,4\}$. Then $|Z|\le\f(r)$. If $t\ge2$, then also $|Z|\le\F(r)$.
\end{lemma}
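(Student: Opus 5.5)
The first bound $|Z|\le\f(r)$ is the classical Delsarte--Goethals--Seidel bound for spherical two-distance sets, and I will reprove it by the polynomial method. Place the center of the sphere at the origin, so that $\norm{z}^2=t$ for all $z\in Z$. For distinct $z,w$ we have $\langle z,w\rangle=t-\tfrac12\norm{z-w}^2\in\{t-1,t-2\}$. For each $z\in Z$ put
\begin{align*}
  p_z(x)=\bigl(\langle x,z\rangle-(t-1)\bigr)\bigl(\langle x,z\rangle-(t-2)\bigr).
\end{align*}
Then $p_z(w)=0$ for $w\ne z$, and $p_z(z)=2\ne0$. Hence the $p_z$ are linearly independent as functions on $Z$. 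On the sphere, the constant term can be rewritten as $c\norm{x}^2/t$, so every $p_z$ restricted to the sphere lies in the span of homogeneous quadratics and linear forms. That space has dimension $\binom{r+1}{2}+r=\f(r)$, which gives $|Z|\le\f(r)$.

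For the sharper bound under $t\ge2$, I would remove the linear part. The idea is to center Gram vectors and use the rank of a Gram-type matrix. Let $G$ be the $|Z|\times|Z|$ Gram matrix, and consider the Hadamard product
\begin{align*}
  H=(G-(t-1)J)\circ(G-(t-2)J).
\end{align*}
Its diagonal is $2$ and its off-diagonal entries are $0$, so $H=2I$ has full rank $|Z|$. Expanding, $H=G\circ G-(2t-3)G+(t-1)(t-2)J$. The matrix $G\circ G$ has rank at most $\F(r)$, being the Gram matrix of the vectors $z\otimes z$ in $\mathrm{Sym}^2$. The other two terms contribute rank at most $r+1$, which only recovers roughly $\f(r)+1$. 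To do better, I would use the sign condition. When $t\ge2$, the coefficient $(t-1)(t-2)\ge0$, and $-(2t-3)G\preceq0$ since $2t-3>0$. From $2I=G\circ G-(2t-3)G+(t-1)(t-2)J$ we get
\begin{align*}
  2I+(2t-3)G\preceq G\circ G+(t-1)(t-2)J.
\end{align*}
The left side is positive definite, so the right side has rank $|Z|$. This gives $|Z|\le\F(r)+1$. That is still one too many, so the constant term needs care.

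The way to finish is to absorb $J$ into $G\circ G$, using the fact that $\sum_i (z)_i^2=t$ is constant. Write the trace form as $\langle z\otimes z,I_r\rangle=t$. Then $J=\tfrac1{t^2}\bigl(\langle z\otimes z,I_r\rangle\langle w\otimes w,I_r\rangle\bigr)$, which is a rank-one matrix lying in the Gram space of $\{z\otimes z\}$ restricted to the span of the identity. Concretely, $G\circ G+cJ$ with $c\ge0$ is the Gram matrix of the vectors $z\otimes z+\sqrt{c}\,\tfrac{t^{-1}}{\sqrt{r}}\ldots$. More simply, since $\langle z\otimes z,I_r/\sqrt r\rangle=t/\sqrt r$ is constant, adding $cJ$ amounts to rescaling the component of $z\otimes z$ along $I_r$. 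So $G\circ G+cJ$ is the Gram matrix of vectors in the same $\F(r)$-dimensional space $\mathrm{Sym}^2(\bR^r)$, and it has rank at most $\F(r)$. Combined with the positive-definiteness above, this yields $|Z|\le\F(r)$.

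The main obstacle is this last step. One has to check that the sign condition $t\ge2$ (so that $c\ge0$ and $2t-3>0$) is exactly what makes the semidefinite comparison legitimate. In the borderline case $t<2$, the coefficient $c$ is negative and the rescaling would need an imaginary factor. There the argument fails, and only the $\f(r)$ bound survives.
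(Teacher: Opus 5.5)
Your proof is correct, but it takes a different route from the paper. The paper proves nothing from scratch. It cites the Delsarte--Goethals--Seidel absolute bound for $|Z|\le\f(r)$. For the second bound it rescales to the unit sphere, notes that the inner products are $(t-1)/t$ and $(t-2)/t$ with sum $(2t-3)/t\ge0$, and invokes Musin's theorem for spherical two-distance sets with nonnegative inner-product sum. You instead give a self-contained argument. For the first bound you use the polynomial method. For the second you use the Hadamard identity, which is an exact equality, $2I+(2t-3)G=G\circ G+(t-1)(t-2)J$; its left side is positive definite and its right side has rank at most $\F(r)$. This is essentially the polynomial-method proof of the special case of Musin's theorem that is needed. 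The paper's version buys brevity; yours buys independence from the literature and shows exactly where the hypothesis on $t$ enters.

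Three small remarks on your last step. First, your Gram realization for $c\ge0$ is valid: take the vectors $zz^T+\mu I_r$ with $2\mu t+\mu^2 r=c$. However, the sign of $c$ is irrelevant. Since $\langle zz^T,I_r\rangle=t$ for every $z$, the vector $\mathbf 1$ lies in the column space of $G\circ G$, so $\operatorname{rank}(G\circ G+cJ)\le\F(r)$ for every real $c$. Second, your argument therefore works whenever $2t-3\ge0$, i.e.\ $t\ge3/2$, which is exactly Musin's condition. Your closing claim that it fails for $t<2$ is inaccurate, although this does not affect the lemma as stated. Third, the sentence beginning ``Concretely'' is unfinished and should be replaced by the explicit vectors above or by the column-space observation.
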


\begin{proof}
The first bound is the standard absolute bound for spherical two-distance
sets; see \cite[Theorem~4.8 and Example~4.10]{DelsarteGoethalsSeidel}. For the second, rescale $Z$ to the
unit sphere. The two inner products are
\begin{align*}
  \alpha=\frac{t-1}{t},
  \quad
  \beta=\frac{t-2}{t},\quad \alpha+\beta=\frac{2t-3}{t}\ge0.
\end{align*}
Musin's bound for spherical two-distance sets with nonnegative sum of
inner products \cite[Theorem~1]{Musin} therefore gives $|Z|\le\F(r)$.
\end{proof}

\begin{lemma}\label{lem:r8}
A spherical set in $\bR^8$ with squared distances in $\{2,4\}$ has at most $43$ points.
\end{lemma}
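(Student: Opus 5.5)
The plan is to reduce to the two spherical estimates of Lemma~\ref{lem:sphere} and then rule out equality in the absolute bound. Let $Z\subset\bR^8$ lie on a sphere of squared radius $t>0$ with squared distances in $\{2,4\}$. If only one distance occurs, $Z$ is a regular simplex and has at most $9$ points, so we may assume both distances occur. Rescaling to the unit sphere, the two inner products are $\alpha=(t-1)/t$ and $\beta=(t-2)/t$, as in the proof of Lemma~\ref{lem:sphere}.

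We split on $t$. If $t\ge2$, Lemma~\ref{lem:sphere} gives $|Z|\le\F(8)=36\le43$ directly. If $t<2$, then $\beta<0$. Lemma~\ref{lem:sphere} still gives the absolute bound $|Z|\le\f(8)=44$, so it remains only to exclude $|Z|=44$.

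To do this, we invoke the equality case of the Delsarte--Goethals--Seidel absolute bound \cite[Theorem~4.8]{DelsarteGoethalsSeidel}. A spherical two-distance set attaining $\f(d)$ is a tight spherical $4$-design. By the Bannai--Damerell nonexistence theorem, tight $4$-designs in $\bR^d$ can exist only when $d+3$ is the square of an odd integer. For $d=8$ we have $d+3=11$, which is not a square, so no $44$-point set exists and $|Z|\le43$. Alternatively, one can bypass the design argument entirely by citing Musin's determination \cite{Musin} that the maximum spherical two-distance set in $\bR^d$ has $\F(d)$ points for $7\le d\le39$, $d\ne22,23$. This gives $|Z|\le36$ outright. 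I would present the design argument as the main proof and mention Musin's result as a remark.

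The main obstacle is the equality case: making sure the tight-design characterization applies verbatim to our normalized set on a sphere of arbitrary radius. This is automatic, since it depends only on the two inner products after rescaling. If one wanted to avoid the external nonexistence theorem, one would instead check the explicit tight-design conditions. For a tight $4$-design in $\bR^8$ the inner products are forced to be the roots of the degree-two Gegenbauer-type annihilator, namely $\frac{-1\pm\sqrt{11}\,\cdot(\text{const})}{\cdot}$, and these are irrational. They therefore cannot equal the rational pair $(t-1)/t,(t-2)/t$ together with the required integrality of the valencies. This is a routine computation that I would verify last.
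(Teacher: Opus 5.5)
Your main argument is correct, but it takes a genuinely different route from the paper's. You read $|Z|=44$ as equality in the Delsarte--Goethals--Seidel absolute bound. You then use the DGS equivalence between two-distance sets attaining $\f(d)$ and tight spherical $4$-designs, and quote nonexistence of tight $4$-designs in $\bR^8$ (for $d\ge3$ one needs $d+3$ to be a square, and $11$ is not).

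The paper's argument is self-contained:
\begin{itemize}
\item Equality makes the $44$ annihilators $p_z$ a basis of the quadratics on the sphere, which forces $\sum_z p_z\equiv2$.
\item Expanding this identity gives $(2t-3)v=0$ and $S=\sum_z zz^T$ scalar. This in effect rederives only the first- and second-moment design identities it needs.
\item It pins $t$ to a root of $99t^2-264t+172=0$, and then the valency $k_z=86-44t$ is irrational.
\end{itemize}
So the paper replaces the tight-design characterization and the nonexistence theorem by a short integrality computation. Your proof is shorter on the page but rests on much heavier external input. Citing Musin is heavier still, since his case $\alpha+\beta<0$ uses Delsarte linear-programming bounds, though it yields the stronger bound $|Z|\le\F(8)=36$. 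Your split into $t\ge2$ and $t<2$ is harmless but unnecessary for the design argument.

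The fallback in your last paragraph has a wrong step. The pair $(t-1)/t,(t-2)/t$ is not known to be rational, because $t$ is an arbitrary positive real; indeed the candidate values $t=(44\pm2\sqrt{11})/33$ in the paper's computation are irrational. The correct direct check uses the ratio instead. Your inner products always satisfy $1-\beta=2(1-\alpha)$. A tight $4$-design in $\bR^8$ has inner products $\frac{-1\pm\sqrt{11}}{10}$, the roots of $10x^2+2x-1$, for which $(1-\beta)/(1-\alpha)=(6+\sqrt{11})/5\ne2$. This rules out $|Z|=44$ using only the equality characterization, with no nonexistence theorem.
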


\begin{proof}
The first bound in Lemma~\ref{lem:sphere} gives $|Z|\le\f(8)=44$.
Suppose equality holds, let $t>0$ be the squared radius, and
translate the sphere center to the origin. For each $z\in Z$, define
\begin{align*}
  p_z(x)
  =\bigl(\langle x,z\rangle-t+1\bigr)
   \bigl(\langle x,z\rangle-t+2\bigr).
\end{align*}
On the sphere, $p_z$ vanishes at every point of $Z\setminus\{z\}$ and
$p_z(z)=2$.
Evaluating a linear relation among the $p_z$ at each point
of $Z$ shows that their restrictions to the sphere are
linearly independent.
The space of quadratic polynomials restricted to the sphere
has dimension at most
\begin{align*}
  \binom{8+2}{2}-1=44.
\end{align*}
Thus the restrictions of the $p_z$ form a basis of this space.
In particular, the constant function $2$ can be written as
\begin{align*}
  2=\sum_{z\in Z}a_zp_z(x)
  \quad (\norm{x}^2=t).
\end{align*}
Evaluating at any $y\in Z$, only the term indexed by $y$
remains, giving $2=2a_y$. Hence every $a_y=1$, and
$\sum_{z\in Z}p_z(x)=2$ on the entire sphere.

Put
\begin{align*}
  S=\sum_{z\in Z}zz^T,
  \quad
  v=\sum_{z\in Z}z.
\end{align*}
Expanding the identity $\sum_{z\in Z}p_z(x)=2$ gives
\begin{align*}
  x^TSx-(2t-3)\langle x,v\rangle+44(t-1)(t-2)=2
  \quad (\norm{x}^2=t).
\end{align*}
Replacing $x$ by $-x$ and subtracting the two identities gives
$(2t-3)\langle x,v\rangle=0$ for every $x$ with $\norm{x}^2=t$.
Since these vectors span $\bR^8$, we obtain $(2t-3)v=0$.
Thus the linear term vanishes, and
\begin{align*}
  x^TSx=2-44(t-1)(t-2)
  \quad (\norm{x}^2=t).
\end{align*}

We next show that $S$ is a scalar matrix.
Since $S$ is symmetric, it has an orthonormal basis of
eigenvectors $u_1,\ldots,u_8$, with corresponding eigenvalues
$\lambda_1,\ldots,\lambda_8$.
Substituting $x=\sqrt{t}\,u_i$ into the identity above shows
that $t\lambda_i$ is independent of $i$.
As $t>0$, all eigenvalues are equal, so $S=\lambda I$.
Taking traces gives
\begin{align*}
  8\lambda
  =\operatorname{tr}S
  =\sum_{z\in Z}\norm{z}^2
  =44t.
\end{align*}
Hence $S=(44t/8)I$.

For $\norm{x}^2=t$, we therefore have
$x^TSx=(44t/8)\norm{x}^2=11t^2/2$.
Substituting this into the preceding identity yields
\begin{align*}
  \frac{11}{2}t^2+44(t-1)(t-2)=2,
\end{align*}
or equivalently $99t^2-264t+172=0$.
Solving for $t$, we obtain
\begin{align*}
  t=\frac{44\pm2\sqrt{11}}{33}.
\end{align*}
Both values are irrational, so in particular $t\ne3/2$.
The relation $(2t-3)v=0$ now implies $v=0$.

For $z\in Z$, let $k_z$ be the number of points at squared distance $2$
from $z$. The sum of the entries in the Gram row indexed by $z$ is
\begin{align*}
  \langle z,v\rangle=44t-86+k_z.
\end{align*}
Since $v=0$, this gives $k_z=86-44t$, which is irrational, contradicting
that $k_z$ is an integer. Therefore $|Z|\le43$.
\end{proof}

\subsection{Intersection estimates}

We record the consequences of intersection theorems used below.

\begin{lemma}\label{lem:intersection-bound}
Let $n\ge10$ and $3\le k\le n/2$.
If $\mathcal A$ is a family of $k$-subsets of $[n]$ such that
$|A\cap B|\ge k-2$ for all $A,B\in\mathcal A$, then
\begin{align*}
|\mathcal A|\le
\max\left\{\binom{n-k+2}{2},1+k(n-k),\binom{k+2}{2}\right\}
\le\binom n2-2.
\end{align*}
\end{lemma}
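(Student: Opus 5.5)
We derive the first inequality from a known intersection theorem and the second from elementary estimates. The family $\mathcal A$ is $(k-2)$-intersecting in the sense $|A\cap B|\ge t$ with $t=k-2$. There are two natural routes. One is the complete intersection theorem of Ahlswede--Khachatrian, which identifies the extremal families $\mathcal F_i=\{A:|A\cap[t+2i]|\ge t+i\}$. The other is the stronger constraint that any two members differ by at most two elements; equivalently, $|A\triangle B|\le4$, so the family has diameter $2$ in the Johnson graph $J(n,k)$. We use the diameter viewpoint, since for diameter-$2$ families in the Johnson scheme the maximal families are classical (Kleitman-type / Ahlswede--Khachatrian for $t=k-2$).

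\textbf{First inequality.} We check that the three quantities correspond to the candidate extremal families:
\begin{align*}
\mathcal F_0&=\{A\supseteq T\},\ |T|=k-2, &&\text{size }\tbinom{n-k+2}{2},\\
\mathcal F_1&=\{A:|A\cap S|\ge k-1\},\ |S|=k, &&\text{size }1+k(n-k),\\
\mathcal F_{2}&=\{A\subseteq W\},\ |W|=k+2, &&\text{size }\tbinom{k+2}{2}.
\end{align*}
The Ahlswede--Khachatrian theorem says a maximum $t$-intersecting family has the size of some $\mathcal F_i$. For $t=k-2$, the family $\mathcal F_i$ consists of sets meeting $[k-2+2i]$ in at least $k-2+i$ elements. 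For $i\ge3$, the set $[k-2+2i]$ has more than $k+2$ elements, and two such sets can then meet in fewer than $k-2$ elements unless $i\le2$. A cleaner way to reach the same conclusion is to bound $\max_i|\mathcal F_i|$ directly and note that $i=2$ already gives all $k$-subsets of a $(k+2)$-set. Either way, the step to verify carefully is that the theorem's range of $i$ collapses to $\{0,1,2\}$. I would state this as a citation, invoking Ahlswede--Khachatrian (or Frankl's earlier diameter-$2$ result) rather than reprove it.

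\textbf{Second inequality.} We compare each term with $\binom n2-2$, using $n\ge10$ and $3\le k\le n/2$.
\begin{itemize}
\item \emph{First term.} Since $k\ge3$, we have $\binom{n-k+2}{2}\le\binom{n-1}{2}=\binom n2-(n-1)\le\binom n2-9$.
\item \emph{Third term.} Since $k\le n/2$, we have $\binom{k+2}{2}\le\binom{n/2+2}{2}$. This is at most $\binom n2-2$ exactly when $(n+4)(n+2)/8\le n(n-1)/2-2$, i.e.\ $3n^2-10n-24\ge0$, which holds for $n\ge10$.
\item \emph{Middle term.} The function $k(n-k)$ is increasing on $k\le n/2$, so its maximum is $n^2/4$, attained at $k=n/2$. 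We then need $1+n^2/4\le n(n-1)/2-2$, i.e.\ $n^2-2n-12\ge0$, which holds for $n\ge5$.
\end{itemize}

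The main obstacle is the first inequality: pinning down the correct extremal list for $t=k-2$ and its validity in the range $k\le n/2$, $n\ge10$, where the Ahlswede--Khachatrian thresholds $(k-t+1)(2+\tfrac{t-1}{i+1})$ must be compared with $n$. If the citation is awkward, a fallback is a direct argument. Fix $A_0\in\mathcal A$. Every other member is obtained by swapping at most two elements of $A_0$. A case analysis on whether all members share a common $(k-2)$-set, a common $(k-1)$-shadow structure, or lie in a $(k+2)$-set then recovers the three configurations.
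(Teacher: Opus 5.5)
Your proposal follows the paper's proof: you cite the Ahlswede--Khachatrian complete intersection theorem with $t=k-2$ to get $|\mathcal A|\le\max_{0\le i\le 2}|\mathcal F_i|$, then bound the three sizes by elementary estimates. Your bound on the third term via $\binom{n/2+2}{2}$ is valid; the paper instead uses $\binom{k+2}{2}\le\binom{n-1}{2}$.

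One side justification should be corrected. The range collapses to $i\le 2$ simply because $\mathcal F_i=\emptyset$ for $i\ge 3$: a $k$-set cannot meet $[k-2+2i]$ in $k-2+i>k$ elements. It is not because members of $\mathcal F_i$ fail to be $(k-2)$-intersecting; they always are. Also, since the bound takes the maximum over all three families, you do not need to compare the Ahlswede--Khachatrian thresholds with $n$.
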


\begin{proof}
For $i=0,1,2$, let $\mathcal F_i$
be the family of $k$-subsets $A\subseteq[n]$ satisfying
$|A\cap[k-2+2i]|\ge k-2+i$.
Since $\mathcal A$ is $(k-2)$-intersecting, the complete intersection theorem
\cite[p.~126]{AhlswedeKhachatrian} gives
\begin{align*}
  |\mathcal A|
  \le\max_{0\le i\le2}|\mathcal F_i|
  =\max\left\{
    \binom{n-k+2}{2},
    1+k(n-k),
    \binom{k+2}{2}
  \right\}.
\end{align*}

Since $n\ge10$ and $3\le k\le n/2$, the first and third terms are
at most $\binom{n-1}{2}\le\binom n2-2$.
The middle term satisfies
$1+k(n-k)\le1+n^2/4\le\binom n2-2$, where the last inequality
is equivalent to $n^2-2n-12\ge0$.
Therefore
\begin{align*}
  |\mathcal A|\le\binom n2-2.
\end{align*}
\end{proof}

\begin{lemma}\label{lem:katona-shadow}
Let $d\ge9$, and let $\mathcal H$ be a family of
$(d-4)$-subsets of $[d]$ satisfying
$|H\cap H'|\ge d-6$ for all $H,H'\in\mathcal H$.
Define
\begin{align*}
\partial_2\mathcal H
=\{A\subseteq[d]: |A|=2,\ A\subseteq H
\text{ for some }H\in\mathcal H\}.
\end{align*}
Then $|\partial_2\mathcal H|\ge|\mathcal H|$.
\end{lemma}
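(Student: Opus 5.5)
We pass to complements. For $H\in\mathcal H$ put $C_H=[d]\setminus H$, a $4$-subset of $[d]$, and let $\mathcal C=\{C_H:H\in\mathcal H\}$, so that $|\mathcal C|=|\mathcal H|$. Since $|H\cap H'|=d-|C_H\cup C_{H'}|$, the hypothesis $|H\cap H'|\ge d-6$ is equivalent to $|C_H\cup C_{H'}|\le6$, that is, $|C_H\cap C_{H'}|\ge2$. Thus $\mathcal C$ is a $2$-intersecting family of $4$-sets. A pair $A$ lies in $\partial_2\mathcal H$ exactly when $A\cap C=\emptyset$ for some $C\in\mathcal C$. Write $T$ for the set of pairs meeting every member of $\mathcal C$ (the transversal pairs), so that $|\partial_2\mathcal H|=\binom d2-|T|$. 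The claim therefore becomes
\begin{align*}
  |\mathcal C|+|T|\le\binom d2 .
\end{align*}

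First, I would bound $|\mathcal C|$ by the complete intersection theorem, exactly as in Lemma~\ref{lem:intersection-bound} but with $k=4$ and $t=2$. This gives $|\mathcal C|\le\max\{\binom{d-2}{2},\,4d-15,\,15\}$, and for $d\ge9$ the maximum is $\binom{d-2}2$. Hence it suffices to show $|T|\le 2d-3=\binom d2-\binom{d-2}2$ whenever $|\mathcal C|$ is close to this maximum. In general I would trade off $|T|$ against $|\mathcal C|$.

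Second, I would analyse $T$ as a graph on $[d]$. Every $C\in\mathcal C$ is a vertex cover of $T$ of size $4$. Hence $T$ has matching number at most $4$, and $T$ is covered by any single member $C_0\in\mathcal C$. This immediately gives $|T|\le\binom42+4(d-4)=4d-10$. I would then split according to $\tau:=\min\{|S|: S\text{ covers }T\}\in\{0,1,2,3,4\}$.
\begin{itemize}
\item If $\tau\le2$ and $T$ has a cover $S$ with $|S|\le 2$, then $|T|\le 2d-3$, which suffices.
\item If $\tau\ge3$, then $T$ contains enough edges that every member of $\mathcal C$ must contain a large forced structure. For instance, if $T$ contains a matching of size $3$, each $C$ contains one endpoint of each of the three edges. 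This severely restricts $\mathcal C$: I would count the $4$-sets that hit all edges of $T$ and are pairwise $2$-intersecting, and show that $|\mathcal C|$ drops to $O(d)$. Then $|\mathcal C|+|T|\le O(d)+4d-10\le\binom d2$ for $d\ge9$.
\end{itemize}

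The main obstacle is the intermediate regime. There $\tau$ is $3$ or $4$ but $T$ is star-like, say a triangle or a star on one vertex plus a few extra edges, so that $|T|$ can exceed $2d-3$ while $\mathcal C$ is still moderately large. In this regime I would fix a vertex $x$ of largest degree in $T$ and distinguish whether $x$ lies in every member of $\mathcal C$. If it does not, every $C\not\ni x$ must contain all $T$-neighbours of $x$, so $\deg_T(x)\le4$ and $|T|\le 4\cdot4$ is bounded. If it does, I would reduce to a $1$-intersecting family of $3$-sets on $[d]\setminus\{x\}$ and apply the same transversal argument one level down, using Hilton--Milner-type bounds.

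Finally, I would verify the resulting numerical inequalities at $d=9$, where the margin is smallest (there $4d-15=\binom{d-2}2=21$). For $d\ge10$ the inequalities follow by monotonicity.
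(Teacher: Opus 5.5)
Your complement reformulation is correct. With $\mathcal C=\{[d]\setminus H:H\in\mathcal H\}$, a $2$-intersecting family of $4$-sets, and $T$ the set of pairs meeting every member of $\mathcal C$, the lemma is equivalent to $|\mathcal C|+|T|\le\binom d2$. Your case $\tau\le2$ is also fine: the complete intersection bound $|\mathcal C|\le\binom{d-2}2$ plus $|T|\le2d-3$ gives exactly $\binom d2$.

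That is the easy half. For $\tau\ge3$ you only outline a strategy, and the estimates you propose do not close. The inequality is sharp for every $d$: for $\mathcal C=\{C:\{1,2\}\subseteq C\}$ one has $|\mathcal C|=\binom{d-2}2$ and $|T|=2d-3$. So there is little room for bounding $|\mathcal C|$ and $|T|$ separately.

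\begin{itemize}
\item \textbf{The $3$-matching step.} Combining a uniform bound on $|\mathcal C|$ with $|T|\le4d-10$ would require $|\mathcal C|\le10$ at $d=9$. But $\mathcal C=\{C\in\binom{[9]}4:\{1,2\}\subseteq C,\ C\cap\{3,4\}\ne\emptyset\}$ is $2$-intersecting with $11$ members. Its transversal graph consists of the $15$ pairs meeting $\{1,2\}$ together with $\{3,4\}$. It has $\tau=3$ and contains the $3$-matching $\{1,5\},\{2,6\},\{3,4\}$. So the step fails on a case it is meant to cover.
\item \textbf{The max-degree step.} $|T|\le16$ together with $|\mathcal C|\le\binom{d-2}2$ gives $\binom{d-2}2+16$. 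At $d=9$ this exceeds $\binom d2=\binom{d-2}2+2d-3$, since $37>36$. You would need either a sharper bound on $|T|$ or a stability form of the complete intersection theorem there.
\item \textbf{The reduction ``one level down''.} The Hilton--Milner step is not carried out. It also leads to an inequality of exactly the same kind, for intersecting $3$-sets versus the pairs meeting all of them, which still needs the same joint trade-off.
\item \textbf{The case split.} $\tau\ge3$ does not force a $3$-matching. For example, $T=K_4$ on $[4]$ is the transversal graph of $\{C:|C\cap[4]|\ge3\}$. A triangle has $\tau=2$, so it does not belong to your intermediate regime. The case analysis is therefore not yet exhaustive.
\end{itemize}

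The missing ingredient is what the paper uses. The lemma is the instance $k=d-4$, $t=d-6$ (so $k-t=2$) of Katona's intersecting shadow theorem: the $(k-t)$-shadow of a $t$-intersecting $k$-uniform family is at least as large as the family. The paper's proof is this one-line citation. Your reformulation for $\mathcal C$ is just this theorem restated, so any case analysis on $T$ must in effect reprove it, with a genuine joint trade-off between $|\mathcal C|$ and $|T|$. If you want a self-contained argument, the natural tool is shifting: compressions preserve $t$-intersection and do not increase shadows. Separate extremal bounds on $|\mathcal C|$ and $|T|$ are not enough.
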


\begin{proof}
Katona's intersecting-shadow theorem~\cite{Katona} states that
for a $t$-intersecting $k$-uniform family, where $0\le t\le k$,
the $(k-t)$-shadow has cardinality at least that of the family.
Apply it with $k=d-4$ and $t=d-6$, so that $k-t=2$.
\end{proof}

\section{Connected short-distance graphs}\label{sec:connected}

We now consider configurations whose short-distance graphs are connected.
Connectivity allows us to place the configuration in a root lattice.
We then treat the irreducible lattices of types $A$ and $D$ separately,
and use an orthogonal decomposition for the reducible case.

\begin{proposition}\label{prop:connected}
Let $X$ be a finite set with pairwise squared distances in $\{2,4\}$,
and put $d=\dimn\aff(X)$. Suppose that $d\ge9$ and that the
short-distance graph of $X$ is connected. Then
\begin{align*}
  |X|\le\F(d).
\end{align*}
If equality holds, then $X$ is congruent to the Johnson construction
$J(d+1,2)$, whose squared circumradius is $2-4/(d+1)$.

Moreover, if $X$ is spherical and its squared circumradius in
$\aff(X)$ is at least $2$, then
\begin{align*}
  |X|\le\F(d)-2.
\end{align*}
\end{proposition}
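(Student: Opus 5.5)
Translate a point $x_0\in X$ to the origin and let $L$ be the lattice generated by $X-x_0$. Since the short-distance graph is connected, every $x-x_0$ is a sum of edge vectors $y-y'$ with $\norm{y-y'}^2=2$. Also $\langle x-y,x-z\rangle=\tfrac12(\norm{x-y}^2+\norm{x-z}^2-\norm{y-z}^2)\in\{0,\pm1,2,3\}$ is an integer. Hence $L$ is an even integral lattice generated by norm-$2$ vectors, i.e.\ a root lattice of rank $d$. By the ADE classification, $L=L_1\oplus\cdots\oplus L_m$ with each $L_i$ of type $A_n$, $D_n$, or $E_{6,7,8}$. I will split into three cases: $L$ irreducible of type $A_d$; $L$ irreducible of type $D_d$ (the $E$ types cannot occur since $d\ge9$); and $L$ reducible.

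\textbf{Type $A_d$.} Realize $A_d=\{v\in\bZ^{d+1}:\sum v_i=0\}$. Pairwise squared distances in $\{2,4\}$ mean that any two points differ by $e_i-e_j$ or by $e_i+e_j-e_k-e_l$ (distinct indices), or possibly by $\pm(2e_i-e_j-e_k)$, which has norm $6$ and is therefore excluded. Fix a coordinate-wise minimum vector $m$; after shifting, each point becomes a $0/1$-type pattern. The plan is to show that $X$ can be translated to a family of $k$-subsets of $[d+1]$ whose pairwise symmetric differences have size at most $4$, i.e.\ a $(k-2)$-intersecting family. This needs an argument that no coordinate reaches $2$, using the fact that two points at distance $\le2$ in the $A$ root system differ by a $\{0,\pm1\}$ vector. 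For $k=2$ (or $k=d-1$ by complementation) we get $|X|\le\binom{d+1}2$, with equality only for $J(d+1,2)$. For $3\le k\le (d+1)/2$, Lemma~\ref{lem:intersection-bound} with $n=d+1\ge10$ gives $|X|\le\binom{d+1}2-2$. The squared circumradius of $J(d+1,2)$ is computed directly as $2-4/(d+1)<2$.

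\textbf{Type $D_d$.} Points are integer vectors with even coordinate sum, and differences are of the form $\pm e_i\pm e_j$ or $\pm e_i\pm e_j\pm e_k\pm e_l$ (the vector $\pm2e_i$ is also norm $4$). I would show that, after translation and sign changes of coordinates, $X$ lies in $\{0,1\}^d$ of constant weight, or else fall into a small exceptional structure. The $2e_i$ differences force a parity or antipodal structure, and I will bound these separately using Lemma~\ref{lem:sphere}. In the constant-weight case, the family is $(k-2)$-intersecting in $[d]$. If $k\in\{2,d-2\}$ this gives at most $\binom d2<\F(d)$; otherwise Lemma~\ref{lem:intersection-bound}, or Lemma~\ref{lem:katona-shadow} for weight $d-4$ relative to complements, gives a strict bound. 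I expect this case, controlling the non-$\{0,1\}$ patterns, to be the main obstacle.

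\textbf{Reducible case and the spherical refinement.} If $L=L_1\oplus L_2$ with ranks $d_1+d_2=d$, project $X$ onto the two orthogonal factors. Norm-$2$ differences lie entirely in one factor, and norm-$4$ differences either lie in one factor or split as $2+2$. So the fibers over a point of the first projection form a two-distance set in dimension $d_2$, and the images are spherical pieces. Using Lemma~\ref{lem:sphere} with $\f(r)$, and Lemma~\ref{lem:r8} when a factor has rank $8$, I will bound $|X|$ by a product-type expression such as $|\pi_1X|+|\pi_2X|$ or $(d_1+1)(d_2+1)$. This is strictly below $\F(d)-2$ for $d\ge9$. Finally, for the last claim, the equality case in type $A$ is $J(d+1,2)$, whose radius is $<2$, and every other case already gives $\le\F(d)-2$, which yields the spherical refinement.
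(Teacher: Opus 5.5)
Your lattice setup and the type $A_d$ case match the paper: differences are $\{0,\pm1\}$-vectors, so each coordinate has range at most $1$, constant weight comes from the zero coordinate sum, and $k\in\{0,1\}$ is trivial.

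\textbf{Type $D_d$.} Here there is a genuine gap, because the reduction you propose is false. Take
$X=\{0\}\cup\{e_i+e_j:1\le i<j\le d\}$.
\begin{itemize}
\item Its squared distances are $2$ and $4$, and its short-distance graph is connected, since the origin is adjacent to every point.
\item It generates $D_d$, has affine dimension $d$, has no $\pm2e_i$ differences, and has $\binom d2+1$ points.
\item Its coordinates already lie in $\{0,1\}$, but the weights are $0$ and $2$. No set $S$ of flipped coordinates makes them constant, since that would need $|S\cap\{i,j\}|=1$ for every pair.
\end{itemize}
So $D_d$ configurations are genuinely of mixed weight, and no constant-weight intersection theorem applies. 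The paper instead uses $0\in X$ to sort the other points by norm into $\pm e_i\pm e_j$, $\pm e_i\pm e_j\pm e_k\pm e_\ell$ and $\pm2e_i$. It then bounds the pair supports $\mathcal E$ and the $4$-supports $\mathcal Q$ together. $\mathcal Q$ is $2$-intersecting, and every member of $\mathcal E$ meets every member of $\mathcal Q$. Hence $\mathcal E$ avoids the $2$-shadow of the complements of $\mathcal Q$, and Katona's theorem gives $|\mathcal E|+|\mathcal Q|\le\binom d2$.

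The $\pm2e_i$ phenomena cannot be handled by Lemma~\ref{lem:sphere}, since nothing here is spherical. The paper handles them combinatorially.
\begin{itemize}
\item A support occurs at most twice.
\item Two points with the same support differ by $\pm2e_i$, and this forces $z_i=0$ for every other point $z$ (a private coordinate). Hence at most $d-1$ supports are doubled, with a separate count when $d-2$ or $d-1$ supports are doubled.
\item A point $2e_1$ occurs at most once and forces every support to contain $1$. Erd\H{o}s--Ko--Rado then bounds $\mathcal Q$.
\end{itemize}
Without these ingredients the $D_d$ case, which you flag as the main obstacle, is not proved.

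\textbf{Reducible case.} This is also not yet an argument. The candidate bound $|X|\le|\pi_1X|+|\pi_2X|$ is false. If $\Delta_1\subset L_1$ and $\Delta_2\subset L_2$ are regular simplices with squared edge $2$, then $\Delta_1\times\Delta_2$ is admissible and has $|\Delta_1||\Delta_2|$ points. Moreover, $\pi_1X$ need not be spherical, so Lemma~\ref{lem:sphere} cannot be applied to it.

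The missing idea is to separate rows with at least two points from singleton rows. A multiple row's coordinate is at squared distance exactly $2$ from every other row coordinate. So the $\ell$ multiple-row coordinates form a regular simplex, and the singleton-row coordinates lie on a sphere of squared radius $1+1/\ell$ in a space of dimension $a-\ell+1$. This gives $|X|\le\ell k+S(a,\ell)+S(b,k)$, whose maximum is at most $\F(d)-(d-3)$.

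\textbf{Spherical refinement.} Excluding only the equality case is not enough. A weight-$2$ family with $\F(d)-1$ points is not covered by $|X|\le\F(d)-2$. You need that every weight-$2$ family of full affine dimension has circumcenter $(2/n)\mathbf{1}$ and squared circumradius $2-4/n<2$; this excludes $k=2$ outright.
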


\begin{proof}
We identify $\aff(X)$ with $\bR^d$ and translate a point of $X$ to
the origin. Put
\begin{align*}
  L:=\operatorname{span}_{\bZ}X.
\end{align*}
For $x,y\in X$, we have
\begin{align*}
  2\langle x,y\rangle
  =\norm{x}^2+\norm{y}^2-\norm{x-y}^2.
\end{align*}
Since $0\in X$, all three squared norms on the right are even
integers. Thus the inner products between points of $X$ are integers,
and every vector of $L$ has even squared norm. Since $X$ spans
$\bR^d$, integrality also implies that $L$ is discrete.
Consequently, $L$ is an even integral lattice of rank $d$.

Let
\begin{align*}
  \Phi:=\{\alpha\in L:\norm{\alpha}^2=2\}.
\end{align*}
For $\alpha\in\Phi$, the reflection
\begin{align*}
  s_\alpha(z)=z-\langle z,\alpha\rangle\alpha
\end{align*}
preserves $L$ and hence preserves $\Phi$.
Every edge of the short-distance graph gives a difference belonging
to $\Phi$. By following a path from $0$ to any point of $X$, we see
that every point of $X$ is an integer linear combination of roots.
Therefore
\begin{align*}
  L=\operatorname{span}_{\bZ}\Phi.
\end{align*}
In particular, $\Phi$ spans $\bR^d$ and is a simply-laced root system.
By the classification of root systems, its irreducible components
have types $A$, $D$, $E_6$, $E_7$, or $E_8$; see, for example,
\cite{Humphreys}.

\medskip
\noindent\textbf{The irreducible case of type $A_d$.}
Suppose that $\Phi$ is irreducible of type $A_d$.
Put $n=d+1$. After an orthogonal identification, we have
\begin{align*}
  L=\left\{z\in\bZ^n:\sum_{i=1}^n z_i=0\right\}.
\end{align*}
Since $X\subset L$, we also have $x-y\in L$ for all $x,y\in X$.

Every coordinate has range at most $1$ on $X$.
Indeed, if some coordinate of $x-y$ had absolute value at least $2$,
then $\norm{x-y}^2\le4$ would force
$x-y=\pm2e_i$ for some $i$.
This is impossible, since the coordinate sum of $x-y$ is zero.

For each $i$, put $m_i=\min_{x\in X}x_i$, and write
\begin{align*}
  m=(m_1,\ldots,m_n),
  \quad X'=X-m.
\end{align*}
The coordinates of every point of $X'$ belong to $\{0,1\}$.
Moreover, every point of $X'$ has the same coordinate sum
\begin{align*}
  k:=-\sum_{i=1}^n m_i.
\end{align*}
Replacing $X'$ by $\mathbf{1}-X'$ if necessary, we may assume that
$k\le n/2$.

Identify each vector in $X'$ with its support, so that $X'$ is now
a family of $k$-subsets of $[n]$. For distinct $A,B\in X'$, the
distance condition gives
\begin{align*}
  |A\mathbin{\triangle}B|\in\{2,4\}.
\end{align*}
Since $|A|=|B|=k$, we obtain
\begin{align*}
  |A\cap B|
  =k-\frac12|A\mathbin{\triangle}B|
  \in\{k-1,k-2\}.
\end{align*}
Thus, when $k\ge3$, the family $X'$ is $(k-2)$-intersecting.

Suppose first that $k\ge3$.
Since $n=d+1\ge10$, Lemma~\ref{lem:intersection-bound} gives
\begin{align*}
|X|=|X'|\le\binom n2-2=\F(d)-2.
\end{align*}
If $k=0$ or $k=1$, then $|X|\le n$, so the same bound holds.
It remains to consider $k=2$. Since $X'$ is a family of
$2$-subsets of $[n]$, we have
\begin{align*}
  |X|=|X'|\le\binom n2=\F(d).
\end{align*}
Equality holds only when $X'$ consists of all $2$-subsets of $[n]$.
Under the identification with characteristic vectors, this is
the Johnson construction $\{e_i+e_j:1\le i<j\le n\}$.

We also record the circumradius in the weight-$2$ case, returning
to the characteristic-vector representation of $X'$.
Every point of $X'$ lies in the hyperplane
\begin{align*}
  H=\left\{x\in\bR^n:\sum_{i=1}^n x_i=2\right\}
\end{align*}
and has squared distance
\begin{align*}
  \norm{x-\frac2n\mathbf{1}}^2
  =2-\frac4n
\end{align*}
from $(2/n)\mathbf{1}$.
Since $\dimn\aff(X')=d=n-1$, we have $\aff(X')=H$.
Thus $(2/n)\mathbf{1}$ is its circumcenter in its affine hull,
and its squared circumradius is $2-4/n<2$.
Consequently, squared circumradius at least $2$ excludes $k=2$,
proving the stronger bound in this case.

\medskip
\noindent\textbf{The irreducible case of type $D_d$.}
Suppose that $\Phi$ is irreducible of type $D_d$.
After an orthogonal identification, we have
\begin{align*}
  L=\left\{z\in\bZ^d:
    \sum_{i=1}^d z_i\equiv0\pmod2
  \right\}.
\end{align*}
Since $0\in X$, every nonzero point of $X$ has squared norm $2$
or $4$. Its coordinates are integers, so a point of squared norm
$2$ has exactly two nonzero coordinates, both equal to $\pm1$.
A point of squared norm $4$ has either one coordinate equal to
$\pm2$ or four coordinates equal to $\pm1$.
We call these three types $R$, $P$, and $Q$, respectively:
\begin{align*}
  R &: \ \pm e_i\pm e_j,\\
  P &: \ \pm2e_i,\\
  Q &: \ \pm e_i\pm e_j\pm e_k\pm e_\ell,
\end{align*}
where the displayed indices within each $R$- or $Q$-point are
distinct.

Let $\mathcal E$ and $\mathcal Q$ denote the families of distinct
supports of the $R$- and $Q$-points, respectively.
If $x,y$ are distinct $Q$-points, then
\begin{align*}
  8-2\langle x,y\rangle=\norm{x-y}^2\le4,
\end{align*}
so their supports intersect in at least two coordinates.
Similarly, comparing an $R$-point and a $Q$-point shows that their
supports intersect. Thus $\mathcal Q$ is $2$-intersecting, and every
member of $\mathcal E$ meets every member of $\mathcal Q$.

Each support in $\mathcal E\cup\mathcal Q$ occurs at most twice.
Indeed, two distinct points with the same support must differ in
exactly one sign, since changing one sign contributes $4$ to their
squared distance. Three sign patterns cannot have pairwise Hamming
distance $1$.

Let $m$ be the number of supports that occur twice.
Suppose $x,y$ have the same support and differ at coordinate $i$.
Then $x-y=\pm2e_i$ and $\norm{x}^2=\norm{y}^2$.
For every $z\in X\setminus\{x,y\}$,
\begin{align*}
  4|z_i|=\left|\norm{z-x}^2-\norm{z-y}^2\right|\le2.
\end{align*}
Since $z_i$ is an integer, it follows that $z_i=0$.
We call $i$ the private coordinate of this doubled support.

Distinct doubled supports have distinct private coordinates, and
no doubled support contains the private coordinate of another.
Since each doubled support has at least two coordinates, it must
also contain a coordinate that is not private.
Consequently,
\begin{align*}
  m\le d-1.
\end{align*}

First suppose that there is no $P$-point.
Consider the complement family
\begin{align*}
  \mathcal H
  :=\{[d]\setminus S:S\in\mathcal Q\}.
\end{align*}
It is $(d-4)$-uniform and $(d-6)$-intersecting.
Let $\partial_2\mathcal H$ be the family of all $2$-subsets contained
in some member of $\mathcal H$.
Lemma~\ref{lem:katona-shadow} gives
\begin{align*}
  |\partial_2\mathcal H|
  \ge|\mathcal H|
  =|\mathcal Q|.
\end{align*}
Every member of $\partial_2\mathcal H$ is disjoint from some member
of $\mathcal Q$, so it cannot belong to $\mathcal E$.
Therefore
\begin{align*}
  |\mathcal E|+|\mathcal Q|\le  |\mathcal E|+|\partial_2\mathcal H|\le\binom d2.
\end{align*}
If $m\le d-3$, we obtain
\begin{align*}
  |X|
  &=1+|\mathcal E|+|\mathcal Q|+m\\
  &\le1+\binom d2+d-3\\
  &=\F(d)-2.
\end{align*}

Otherwise, put $h=d-m$, so that $h\in\{1,2\}$.
Only $h$ coordinates are nonprivate.
A $Q$-point whose support occurs once would require four nonprivate
coordinates. A doubled $Q$-support would require three nonprivate
coordinates in addition to its private coordinate.
Neither is possible, so there are no $Q$-points.
The doubled $R$-supports contribute $2m$ points, and every remaining
$R$-support is a pair of nonprivate coordinates.
Hence
\begin{align*}
  |X|\le1+2m+\binom h2\le2d-1\le\F(d)-2.
\end{align*}

Now suppose that a $P$-point occurs.
There is at most one such point, since two distinct $P$-points
have squared distance $8$ or $16$.
After permuting coordinates and changing a coordinate sign, we may
assume that this point is $2e_1$.

Comparing an $R$- or $Q$-point with $2e_1$ shows that its first
coordinate must be $1$.
Thus every support contains $1$, and
\begin{align*}
  |\mathcal E|\le d-1.
\end{align*}
Deleting $1$ from every member of $\mathcal Q$ gives an intersecting
family of $3$-subsets of a $(d-1)$-element set.
Since $d-1\ge2\cdot3$, the Erd\H{o}s--Ko--Rado theorem
\cite[Theorem 1]{ErdosKoRado} gives 
\begin{align*}
  |\mathcal Q|\le\binom{d-2}{2}.
\end{align*}
Together with $m\le d-1$, this yields
\begin{align*}
  |X|
  &=2+|\mathcal E|+|\mathcal Q|+m\\
  &\le2+(d-1)+\binom{d-2}{2}+(d-1)\\
  &=\F(d)-d+3\\
  &\le\F(d)-2.
\end{align*}
Thus the stronger bound holds throughout the $D_d$ case,
without any spherical assumption.

\medskip
\noindent\textbf{The reducible case.}
Suppose that $L$ is reducible.
Split it into two nonzero orthogonal summands,
\begin{align*}
  L=L_1\perp L_2,
  \quad
  a=\operatorname{rank}L_1,
  \quad
  b=\operatorname{rank}L_2,
  \quad
  a+b=d.
\end{align*}
The summands need not be irreducible.

Write each point of $X$ as $(u,v)$, with $u\in L_1$ and $v\in L_2$.
We regard these points as occupied cells: a row fixes $u$, and a
column fixes $v$.
Every nonzero vector in either summand has positive even squared
norm, hence squared norm at least $2$.
Therefore, if two occupied cells differ in both coordinates, then
\begin{align*}
  \norm{u-u'}^2=\norm{v-v'}^2=2.
\end{align*}
More generally, distinct row coordinates have squared distances
in $\{2,4\}$, and the same is true of distinct column coordinates.

Call a row or column multiple if it contains at least two points.
Let $\ell$ be the number of multiple rows and $k$ the number of
multiple columns.

We first show that $k\ge1$. There must be two points of $X$ with
different first coordinates. Otherwise, since $0\in X$, every
first coordinate would be zero, contradicting the fact that $X$
spans both nonzero orthogonal summands.
Choose a path in the short-distance graph between two such points.
Along this path, at least one edge changes the first coordinate.
A short edge cannot change both coordinates, since each nonzero
coordinate difference contributes at least $2$ to the squared
distance. Thus the endpoints of this edge have the same second
coordinate and lie in the same column. This column contains at
least two points, so $k\ge1$.

Similarly, there are two points with different second coordinates.
A short-edge path between them contains an edge whose endpoints
have different second coordinates but the same first coordinate.
These endpoints lie in a multiple row, proving $\ell\ge1$.

The coordinate of a multiple row is at squared distance $2$
from the coordinate of every other row.
Indeed, given a point in another row, we can choose a point in the
multiple row with a different column coordinate.
The two points then differ in both coordinates.

Let $u_1,\ldots,u_\ell$ be the multiple-row coordinates, and let
\begin{align*}
  c=\frac1\ell\sum_{i=1}^{\ell}u_i,
  \quad
  U=\spanR\{u_i-c:1\le i\le\ell\}.
\end{align*}
These coordinates form a regular simplex with squared edge length
$2$. Thus
\begin{align*}
  \dimn U=\ell-1,
  \quad
  \norm{u_i-c}^2=\frac{\ell-1}{\ell},
  \quad
  \ell\le a+1.
\end{align*}
The formulas also hold when $\ell=1$.

If $u$ is a singleton-row coordinate, then $\norm{u-u_i}^2=2$
for every $i$. Since all $\norm{u_i-c}^2$ are equal, subtracting
the equations for $i$ and $j$ gives
\begin{align*}
  0
  &=\norm{(u-c)-(u_i-c)}^2
    -\norm{(u-c)-(u_j-c)}^2\\
  &=-2\langle u-c,u_i-u_j\rangle.
\end{align*}
The differences $u_i-u_j$ span $U$: they belong to $U$, and
$u_i-c=\ell^{-1}\sum_{j=1}^{\ell}(u_i-u_j)$.
Hence $u-c\perp U$.
Consequently,
\begin{align*}
  2
  &=\norm{u-u_i}^2\\
  &=\norm{u-c}^2+\norm{u_i-c}^2,
\end{align*}
and hence
\begin{align*}
  \norm{u-c}^2=1+\frac1\ell.
\end{align*}
The singleton-row coordinates therefore lie on a sphere of positive
squared radius $1+1/\ell$ in a space of dimension $a-\ell+1$.

By Lemma~\ref{lem:sphere}, their number is at most
\begin{align*}
  S(a,\ell):=
  \begin{cases}
    \F(a), & \ell=1,\\
    \f(a-\ell+1), & 2\le\ell\le a+1.
  \end{cases}
\end{align*}
When $a-\ell+1=0$, the positive-radius sphere is empty, consistent
with $\f(0)=0$.
The same argument bounds the number of singleton columns by
$S(b,k)$, where $1\le k\le b+1$.

There are at most $\ell k$ occupied cells lying in both a multiple
row and a multiple column.
Every other cell belongs to a singleton row or a singleton column.
It follows that
\begin{align*}
  |X|\le\ell k+S(a,\ell)+S(b,k).
\end{align*}

We now maximize the expression
$B(\ell,k):=\ell k+S(a,\ell)+S(b,k)$.
For fixed $k$, we have $S(a,1)=\F(a)$ and
$S(a,2)=\f(a-1)=\F(a)-1$, so
$B(2,k)-B(1,k)=k-1\ge0$.
It therefore suffices to consider $2\le\ell\le a+1$.
On this interval,
\begin{align*}
  B(\ell,k)
  =\ell k+\frac{(a-\ell+1)(a-\ell+4)}2+S(b,k).
\end{align*}
This is a quadratic polynomial in $\ell$ with positive leading
coefficient, so its maximum occurs at $\ell=2$ or $\ell=a+1$.
Applying the same argument to $k$, it suffices to consider
$k=2$ or $k=b+1$.
Using $S(a,2)=\F(a)-1$, $S(a,a+1)=0$, and the corresponding
identities for $b$, these four choices give
\begin{align*}
  |X|\le\max\bigl\{\F(a)+\F(b)+2,\,
     \F(b)+2a+1,\,\F(a)+2b+1,\,
     (a+1)(b+1)
  \bigr\}.
\end{align*}

Put
\begin{align*}
  T=\F(a+b)-(a+b-3).
\end{align*}
Subtracting the four terms above from $T$, respectively, gives
\begin{align*}
  T-\bigl(\F(a)+\F(b)+2\bigr)
    &=(a-1)(b-1),\\
  T-\bigl(\F(b)+2a+1\bigr)
    &=\frac{(a-1)(2b+a-4)}2,\\
  T-\bigl(\F(a)+2b+1\bigr)
    &=\frac{(b-1)(2a+b-4)}2,\\
  T-(a+1)(b+1)
    &=\frac{(a-1)(a-2)+(b-1)(b-2)}2.
\end{align*}
Each quantity is nonnegative for positive integers $a,b$.
Therefore
\begin{align*}
  |X|
  \le\F(d)-(d-3)
  \le\F(d)-2.
\end{align*}

Finally, an irreducible root system of type $E$ has rank at most
$8$, so it cannot occur by itself when $d\ge9$.
Any exceptional components in a reducible root system have already
been covered by the preceding argument.

Thus equality in $|X|\le\F(d)$ can occur only in the $A_d$ case
with weight $2$, where it gives the Johnson construction.
If $X$ is spherical with squared circumradius at least $2$, that
case is excluded, and all remaining cases satisfy
$|X|\le\F(d)-2$.
\end{proof}

\section{Disconnected short-distance graphs}\label{sec:disconnected}

Throughout this section, let $X$ be a finite set with pairwise
squared distances in $\{2,4\}$, and suppose that its short-distance
graph is disconnected. Put $d=\dimn\aff(X)$ and assume that $d\ge9$.
Let $X_1,\ldots,X_s$ be its connected components, where $s\ge2$.
Every pair of points in different components is at squared distance
$4$. We first identify the affine and orthogonal structure forced
by this condition.

\begin{lemma}\label{lem:component-sphere}
For each $i$, there is a unique point $c_i\in\aff(X_i)$ equidistant from every point of $X_i$. Thus $X_i$ lies on a sphere in its affine hull, and
\begin{align*}
  \aff(X_i)=c_i+V_i.
\end{align*}
Let $t_i$ denote the squared radius of this sphere, with $t_i=0$ when $X_i$ is a singleton.
\end{lemma}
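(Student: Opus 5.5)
The plan is to use another component of the short-distance graph as a witness. Since $s\ge2$, pick a point $y\in X_j$ with $j\ne i$. Every point of $X_i$ is at squared distance $4$ from $y$, so $X_i$ lies on the sphere of squared radius $4$ centered at $y$ in the ambient space.

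To pass to $\aff(X_i)$, let $c_i$ be the orthogonal projection of $y$ onto the affine subspace $\aff(X_i)$. For $x\in X_i$, the vector $y-c_i$ is orthogonal to $V_i$, while $x-c_i\in V_i$. Pythagoras then gives
\begin{align*}
  \norm{x-c_i}^2=\norm{x-y}^2-\norm{y-c_i}^2=4-\norm{y-c_i}^2,
\end{align*}
which is independent of $x\in X_i$. So $c_i$ is equidistant from all points of $X_i$, and we may set $t_i:=4-\norm{y-c_i}^2\ge0$.

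For uniqueness, suppose $c,c'\in\aff(X_i)$ are both equidistant from $X_i$. Subtracting the equations $\norm{x-c}^2=\norm{x_0-c}^2$ for $x,x_0\in X_i$ shows that $\langle x-x_0,c\rangle$ is determined by the norms of $x$ and $x_0$ alone. The same holds for $c'$, so $\langle x-x_0,c-c'\rangle=0$. Hence $c-c'\perp V_i$. But $c-c'\in V_i$, since both points lie in $\aff(X_i)=x_0+V_i$. Therefore $c=c'$.

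The identity $\aff(X_i)=c_i+V_i$ holds because $\aff(X_i)=x_0+V_i$ and $c_i\in\aff(X_i)$. For a singleton $X_i=\{x_0\}$ we have $V_i=0$ and $c_i=x_0$, so $t_i=0$. For larger components, $t_i>0$ because distinct points cannot lie at distance $0$ from the center.

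No step here is a real obstacle. The only point that needs care is that the witness $y$ must come from a different component, which is exactly where the hypothesis that the graph is disconnected is used.
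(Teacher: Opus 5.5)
Your proof is correct and follows essentially the same route as the paper. You project a point of another component onto the affine hull of $X_i$, use Pythagoras to get equidistance, and obtain uniqueness from the linear relations $2\langle x-x_0,c\rangle=\norm{x}^2-\norm{x_0}^2$. Your uniqueness step also shows that $c_i$ is the projection of every point of $X\setminus X_i$, whatever witness $y$ is chosen; the paper records this explicitly because it is reused in Lemma~\ref{lem:W-orthogonal}.
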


\begin{proof}
The assertion is immediate for a singleton. Fix a nonsingleton component $X_i$ and choose $x_0\in X_i$. Translate the entire configuration so that $x_0=0$; then $V_i=\spanR(X_i)$ and $\aff(X_i)=V_i$. 
For any point $y\in X\setminus X_i$, decompose it orthogonally as $y=p+q$, with $p\in V_i$ and $q\perp V_i$. Since $\norm{y-x}^2=4$ for every $x\in X_i$, we have
\begin{align*}
  \norm{p-x}^2=4-\norm{q}^2
  \quad (x\in X_i).
\end{align*}
Thus $p$ is equidistant from all points of $X_i$. Taking $x=0$ and expanding gives
\begin{align*}
  2\langle p,x\rangle=\norm{x}^2
  \quad (x\in X_i).
\end{align*}
More generally, if $p'\in V_i$ is equidistant from all points of
$X_i$, then comparison with $0\in X_i$ gives
$2\langle p',x\rangle=\norm{x}^2$ for every $x\in X_i$.
Subtracting the equations for $p$ and $p'$ shows that $p-p'$
is perpendicular to $\spanR(X_i)=V_i$.
Since $p-p'\in V_i$, we obtain $p=p'$.
Thus $p$ is the unique center in $V_i$, and it is the projection
of every point outside $X_i$ onto $V_i$.
Translating back proves the assertion.

\end{proof}

\begin{lemma}\label{lem:Vi-orthogonal}
If $i\ne j$, then $V_i\perp V_j$.
\end{lemma}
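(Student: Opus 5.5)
The plan is to prove $V_i\perp V_j$ directly from the fact that every point of $X_j$ is at squared distance $4$ from every point of $X_i$. Singleton components contribute only $V_i=0$, so I assume both $X_i$ and $X_j$ are nonsingletons; otherwise there is nothing to prove.

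Fix $x,x'\in X_i$ and $y,y'\in X_j$. Since all cross distances equal $4$,
\begin{align*}
  \norm{x-y}^2=\norm{x-y'}^2=\norm{x'-y}^2=\norm{x'-y'}^2=4 .
\end{align*}
Expand the alternating combination
\begin{align*}
  \norm{x-y}^2-\norm{x-y'}^2-\norm{x'-y}^2+\norm{x'-y'}^2 .
\end{align*}
All squared-norm terms $\norm{x}^2$, $\norm{x'}^2$, $\norm{y}^2$, $\norm{y'}^2$ cancel in pairs. What remains is
\begin{align*}
  -2\langle x,y\rangle+2\langle x,y'\rangle+2\langle x',y\rangle-2\langle x',y'\rangle
  =-2\langle x-x',\,y-y'\rangle .
\end{align*}
The left-hand side equals $4-4-4+4=0$, so $\langle x-x',y-y'\rangle=0$.

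The vectors $x-x'$ with $x,x'\in X_i$ span $V_i$ by definition, and likewise the $y-y'$ span $V_j$. By bilinearity of the inner product, $V_i\perp V_j$.

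As a cross-check, Lemma~\ref{lem:component-sphere} gives the same conclusion. Every $y\in X_j$ projects onto $V_i$ (after translating so that a point of $X_i$ is the origin) at the same point $c_i$. Hence every difference $y-y'$ has zero projection onto $V_i$, that is, $y-y'\perp V_i$.

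There is no real obstacle here; the only care needed is to handle singleton components, where $V_i=0$, and to note that the identity requires no assumption on distances within components.
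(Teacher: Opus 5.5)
Your proof is correct and is the same argument as the paper's: polarizing the four cross-component squared distances (all equal to $4$) gives $\langle x-x',\,y-y'\rangle=0$, and bilinearity extends this to the spans $V_i$ and $V_j$. The separate treatment of singletons is harmless but unnecessary, since then $x=x'$ and the identity holds trivially.
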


\begin{proof}
Choose $x,x'\in X_i$ and $y,y'\in X_j$. All four cross-component squared distances equal $4$. Polarizing gives
\begin{align*}
  0&=\norm{x-y}^2+\norm{x'-y'}^2
     -\norm{x-y'}^2-\norm{x'-y}^2\\
   &=-2\langle x-x',y-y'\rangle.
\end{align*}
Thus every difference from $X_i$ is perpendicular to every difference from $X_j$, and the same is true of their spans.
\end{proof}

Write $C=\{c_1,\ldots,c_s\}$ for the set of component centers and
\begin{align*}
  W:=\spanR(C-C)=\spanR\{c_i-c_j:1\le i,j\le s\}.
\end{align*}
Since $c_1\in\aff(X)$, we have
\begin{align*}
  \aff(X)-c_1
  &=\spanR\bigcup_{i=1}^s\bigl(\aff(X_i)-c_1\bigr)\\
  &=\spanR\bigcup_{i=1}^s\bigl(V_i+(c_i-c_1)\bigr)\\
  &=V_1+\cdots+V_s+\spanR(C-c_1)\\
  &=V_1+\cdots+V_s+W.
\end{align*}

\begin{lemma}\label{lem:W-orthogonal}
For every $j$, $W\perp V_j$. Consequently the sum in the affine-hull decomposition above is orthogonal and
\begin{align*}
  d:=\dimn\aff(X)=\sum_{i=1}^s r_i+\dimn W,
\end{align*}
where $r_i=\dimn V_i$ for all $i$.
\end{lemma}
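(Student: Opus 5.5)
It suffices to show $c_i-c_k\perp V_j$ for all $i,k$, since these differences span $W$. The proof from Lemma~\ref{lem:component-sphere} shows more: for every point $y\notin X_j$, the orthogonal projection of $y$ onto $\aff(X_j)=c_j+V_j$ is exactly $c_j$. In other words, $y-c_j\perp V_j$ for every $y\in X\setminus X_j$. (If $X_j$ is a singleton, then $V_j=0$ and there is nothing to prove, so assume $X_j$ has at least two points.)

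\textbf{Case $i,k\ne j$.} Every point of $X_i\cup X_k$ satisfies $y-c_j\perp V_j$. Affine combinations preserve this property, because the condition $\langle y-c_j,v\rangle=0$ is affine in $y$. Since $c_i\in\aff(X_i)$ and $c_k\in\aff(X_k)$, we get $c_i-c_j\perp V_j$ and $c_k-c_j\perp V_j$. Subtracting gives $c_i-c_k\perp V_j$.

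\textbf{Case $k=j$, $i\ne j$.} We need $c_i-c_j\perp V_j$. This follows directly from the previous paragraph, since $c_i$ is an affine combination of points of $X_i$, each of which projects onto $c_j$.

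\textbf{Case $i=k=j$.} Here $c_j-c_j=0$, which is trivially perpendicular to $V_j$.

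Hence $W\perp V_j$ for every $j$. Together with Lemma~\ref{lem:Vi-orthogonal}, the subspaces $V_1,\dots,V_s,W$ are pairwise orthogonal. The decomposition $\aff(X)-c_1=V_1+\cdots+V_s+W$ established above is therefore an orthogonal direct sum, and taking dimensions gives $d=\sum_i r_i+\dimn W$.

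The only delicate point is justifying the projection claim: the translated point $p$ in Lemma~\ref{lem:component-sphere} was shown to be the unique equidistant point of $V_i$, hence the center. I would restate this explicitly as $y-c_j\perp V_j$ before running the argument above.
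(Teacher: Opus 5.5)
Your proposal is correct and follows essentially the same route as the paper: both use that every point outside $X_j$ projects onto $c_j+V_j$ at the center $c_j$, extend this to the other centers $c_i$ through affine combinations, and then combine with Lemma~\ref{lem:Vi-orthogonal} to obtain an orthogonal direct sum and the dimension formula. Your case split is slightly redundant, since the case $k=j$ is already covered by the fact $c_i-c_j\perp V_j$ established in the first case, and your explicit treatment of singleton components ($V_j=0$) is a harmless clarification.
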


\begin{proof}
Fix $j$ and choose $x_0\in X_j$. Translate the configuration by $-x_0$, and write
\begin{align*}
  \bar c_i:=c_i-x_0
  \quad (1\le i\le s).
\end{align*}
The spaces $V_i$ are unchanged, and
\begin{align*}
  \aff(X_j)-x_0=V_j,
  \quad\text{so}\quad
  \bar c_j\in V_j.
\end{align*}
By the proof of Lemma~\ref{lem:component-sphere}, for every
$y\in X\setminus X_j$, the orthogonal projection of $y-x_0$
onto $V_j$ is $\bar c_j$.
For $i\ne j$, the point $\bar c_i$ is an affine combination of
points in $X_i-x_0$. Since orthogonal projection preserves affine
combinations, its projection onto $V_j$ is also $\bar c_j$.
Finally, $\bar c_j\in V_j$, so its projection is itself.
 Hence every difference $\bar c_i-\bar c_k$ is perpendicular
to $V_j$. Since $\bar c_i-\bar c_k=c_i-c_k$, this proves
$W\perp V_j$. As $j$ was arbitrary, the conclusion holds for every $j$.
Combining this with Lemma~\ref{lem:Vi-orthogonal} and the affine-hull
decomposition gives
\begin{align*}
  d=\sum_{i=1}^s r_i+\dimn W.
\end{align*}
\end{proof}

\subsection{The center form and its parameters}

For $i\ne j$, choose $x\in X_i$ and $y\in X_j$. The three terms in
\begin{align*}
  x-y=(x-c_i)+(c_i-c_j)+(c_j-y)
\end{align*}
lie in mutually orthogonal subspaces. Since $\norm{x-y}^2=4$, it follows that
\begin{align*}
  \norm{c_i-c_j}^2=4-t_i-t_j.
\end{align*}
Define
\begin{align*}
  a_i:=2-t_i.
\end{align*}
Then $\norm{c_i-c_j}^2=a_i+a_j$ for $i\ne j$. For real numbers $z_1,\ldots,z_s$ with $\sum_i z_i=0$, the standard distance-matrix identity gives
\begin{align*}
  \left\|\sum_{i=1}^s z_i c_i\right\|^2
  =-\frac12\sum_{i,j=1}^s z_i z_j\norm{c_i-c_j}^2
  =\sum_{i=1}^s a_i z_i^2.
  \tag{1}\label{eq:center-form}
\end{align*}
The left-hand side is nonnegative. Thus the diagonal quadratic form $\sum_i a_i z_i^2$ is positive semidefinite on the hyperplane $\sum_i z_i=0$. In particular, at most one of the $a_i$ can be negative: if $a_i,a_j<0$, the choice $z_i=1$, $z_j=-1$ and all other $z_k=0$ contradicts nonnegativity.

The linear map $z\mapsto\sum_i z_i c_i$ from
$\{z\in\bR^s:\sum_i z_i=0\}$ onto $W$ has kernel equal to
the zero set of the form in \eqref{eq:center-form}.
Thus the rank of this form is $\dimn W$.
Let $R=\sum_i r_i$. We now consider the three possible sign patterns.

\subsection{All parameters are positive}

Suppose first that every $a_i>0$. The form in \eqref{eq:center-form} is positive definite on the $(s-1)$-dimensional hyperplane $\sum_i z_i=0$. Hence $\dimn W=s-1$, and
\begin{align*}
  d=R+s-1.
\end{align*}
Each nonsingleton component has $t_i<2$, so Lemma~\ref{lem:sphere} gives $|X_i|\le\f(r_i)$. Let $m$ be the number of singleton components. If all components are singletons, then $|X|=s=d+1<\F(d)$. Otherwise $m\le s-1$, and superadditivity of $\f$ gives
\begin{align*}
  |X|\le \f(R)+m\le\f(R)+s-1.
\end{align*}
For $s\ge3$, the difference between $\F(d)$ and this last expression is
\begin{align*}
  \F(R+s-1)-\f(R)-(s-1)
  =(s-2)R+\frac{(s-1)(s-2)}2>0.
\end{align*}
If $s=2$ and both components are nonsingletons, then $|X|\le\f(r_1)+\f(r_2)<\f(R)<\F(R+1)=\F(d)$. If one component is a singleton, the other has dimension $d-1$ and is connected. For $d\ge10$, Proposition~\ref{prop:connected} gives $|X|\le\F(d-1)+1<\F(d)$. When $d=9$, Lemma~\ref{lem:r8} gives $|X|\le43+1=44<45=\F(9)$.

\subsection{Some parameters vanish and none is negative}

Now suppose some $a_i=0$ and all the others are nonnegative. Let $q$ be the number of zero parameters and $p=s-q$ the number of positive parameters. The kernel of the form in \eqref{eq:center-form}, restricted to $\sum_i z_i=0$, consists exactly of vectors supported on the zero-parameter coordinates and having coordinate sum zero. It has dimension $q-1$. Therefore
\begin{align*}
  \dimn W=(s-1)-(q-1)=p,
  \quad d=R+p.
\end{align*}

A zero parameter means $t_i=2$. Such a component cannot be a singleton, and it has $r_i\ge2$: a spherical two-distance set on a line has at most two points and squared circumradius at most $1$. Let $R_0$ be the sum of the dimensions $r_i$ over the zero-parameter components, and let $R_+$ be the corresponding sum over the positive-parameter components. Then $R_0\ge2$. Applying Lemma~\ref{lem:sphere} to the zero-parameter components and its first bound to the positive-parameter components gives
\begin{align*}
  |X|\le \F(R_0)+\f(R_+)+p.
\end{align*}
Here the $p$ term allows for singleton components among the positive parameters. The difference from $\F(d)$ is
\begin{align*}
  \F(R_0+R_++p)-\F(R_0)-\f(R_+)-p
  =(R_0-1)R_++p(R_0+R_+)+\frac{p(p-1)}2.
\end{align*}
This is positive if $p\ge1$. If $p=0$, all components have squared circumradius $2$, and there are at least two positive-dimensional components. Hence
\begin{align*}
  |X|\le\sum_i\F(r_i)<\F\left(\sum_i r_i\right)=\F(d).
\end{align*}

\subsection{One parameter is negative}

Finally, suppose one parameter is negative. Relabel it as $a_1<0$, put $r=r_1$, and let $p=s-1$ be the number of remaining components. Nonnegativity of \eqref{eq:center-form} on vectors $e_1-e_j$ implies $a_j\ge-a_1>0$ for every $j=2,\ldots,s$. Let
\begin{align*}
  R:=\sum_{j=2}^s r_j.
\end{align*}
The form is positive definite on the $(p-1)$-dimensional subspace defined by $z_1=0$ and $\sum_{j=2}^s z_j=0$, so $\dimn W\ge p-1$. Also $\dimn W\le s-1=p$. Thus
\begin{align*}
  r+R+p-1\le d\le r+R+p.
\end{align*}
Since $a_1<0$, we have $t_1>2$, and $r\ge2$. Lemma~\ref{lem:sphere} gives $|X_1|\le\F(r)$; the positive-parameter components together contribute at most $\f(R)+p$. Consequently
\begin{align*}
  |X|\le\F(r)+\f(R)+p.
\end{align*}
Except when $(p,R)=(1,0)$, direct comparison gives
\begin{align*}
  \F(r+R+p-1)-\F(r)-\f(R)-p
  =(r-1)R+(p-1)(r+R)+\frac{p(p-3)}2\ge0.
\end{align*}
The inequality is strict except possibly when $p=1$, $R=1$, and $r=2$. In that exceptional numerical case, $d\le r+R+p=4$, contrary to $d\ge9$. Therefore $|X|<\F(d)$ in all these cases.

It remains to consider $p=1$ and $R=0$: there is one nonsingleton component and one singleton. If their centers are distinct, then $\dimn W=1$, so $d=r+1$ and
\begin{align*}
  |X|\le\F(r)+1<\F(r+1)=\F(d).
\end{align*}
If their centers coincide, then $d=r\ge9$. The nonsingleton component is connected and has squared circumradius $t_1>2$. Proposition~\ref{prop:connected} gives $|X_1|\le\F(r)-2$, and adding the singleton still yields $|X|<\F(d)$.

\begin{proposition}\label{prop:disconnected}
Let $X$ be a finite set with pairwise squared distances in $\{2,4\}$,
and put $d=\dimn\aff(X)$.
If $d\ge9$ and the short-distance graph of $X$ is disconnected,
then $|X|<\F(d)$.
\end{proposition}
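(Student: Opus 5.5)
The plan is to assemble the case analysis already carried out in this section. Let $X_1,\ldots,X_s$ ($s\ge2$) be the components of the short-distance graph. By Lemma~\ref{lem:component-sphere} each component lies on a sphere with center $c_i$ and squared radius $t_i$ in its affine hull. Lemmas~\ref{lem:Vi-orthogonal} and~\ref{lem:W-orthogonal} give the orthogonal decomposition $d=\sum_i r_i+\dimn W$. With $a_i=2-t_i$, the center form \eqref{eq:center-form} is positive semidefinite on $\sum_i z_i=0$, its rank equals $\dimn W$, and at most one $a_i$ is negative. The sign pattern of $(a_1,\ldots,a_s)$ therefore falls into exactly three exhaustive cases: all $a_i>0$; some $a_i=0$ and none negative; exactly one $a_i<0$. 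We verify $|X|<\F(d)$ in each case and conclude.

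\textbf{All positive.} Here $\dimn W=s-1$, and Lemma~\ref{lem:sphere} gives $|X_i|\le\f(r_i)$. Superadditivity of $\f$ handles $s\ge3$, and also $s=2$ with two nonsingleton components. In the remaining subcase, a singleton plus a connected component of dimension $d-1$, we use Proposition~\ref{prop:connected} when $d\ge10$ and Lemma~\ref{lem:r8} when $d=9$.

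\textbf{Some zero, none negative.} Components with $t_i=2$ have $r_i\ge2$, and the $\F$-bound of Lemma~\ref{lem:sphere} applies to them. The explicit difference formula gives a strict inequality when $p\ge1$, and strict superadditivity of $\F$ gives it when $p=0$.

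\textbf{One negative.} The dimension is pinned between $r+R+p-1$ and $r+R+p$. The numerical comparison gives strict inequality except in configurations that force $d\le4$, or in the case of one nonsingleton plus one singleton. In the latter, either the centers differ, giving $d=r+1$, or they coincide. Coinciding centers means the connected component has squared circumradius greater than $2$ in full dimension $r=d\ge9$, and the strengthened bound $\F(d)-2$ of Proposition~\ref{prop:connected} gives the claim.

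The delicate points are the boundary subcases: the $d=9$ singleton case, which needs Lemma~\ref{lem:r8} because Proposition~\ref{prop:connected} needs dimension $\ge9$, and the coincident-centers case, which needs the circumradius-$\ge2$ refinement. Both are already settled above, so the proof is essentially a summary that the three sign cases are exhaustive and each yields strict inequality.
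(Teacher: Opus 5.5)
Your proposal is correct and is essentially the paper's proof, which consists of noting that the three sign patterns for the $a_i$ are exhaustive and that the preceding subsections give $|X|<\F(d)$ in each case, including the two boundary subcases you single out (Lemma~\ref{lem:r8} for $d=9$ and the circumradius refinement of Proposition~\ref{prop:connected} for coincident centers). The only detail your summary skips is the trivial all-singleton subcase of the all-positive case, where $|X|=s=d+1<\F(d)$ directly; this needs separate mention because the superadditivity estimate there uses $m\le s-1$.
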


\begin{proof}
The three sign cases above exhaust all possibilities for the
parameters $a_i$ and each gives the strict inequality.
\end{proof}

\section{A relative bound for other distance ratios}
\label{sec:relative}

We now consider two-distance sets with arbitrary distance ratio.
For squared distances $0<a<b$, put $\gamma=(a+b)/(b-a)$.
Chen and Yu~\cite[Theorem~3.1]{ChenYu} obtained the related bound
\begin{align*}
|X|\le\frac{(d+1)(\gamma^2-1)}{\gamma^2-d-1}+1.
\end{align*}
In the regime $\gamma^2>d+1$, the following proposition removes
the additive term $1$ and shows that equality in the improved
bound forces sphericity. These conclusions exclude $\kappa=3$
in dimensions $13\le d\le21$ and determine $M_3(22)$.

\begin{proposition}\label{prop:relative}
Let $X\subseteq\bR^d$ be a finite set whose pairwise squared
distances belong to $\{a,b\}$, where $0<a<b$. Put
$\gamma=(a+b)/(b-a)$.
If $\gamma^2>d+1$, then
\begin{align*}
  |X|\le
  \frac{(d+1)(\gamma^2-1)}{\gamma^2-d-1}.
\end{align*}
If equality holds, then $X$ has affine dimension $d$ and
lies on a sphere in its affine hull.
\end{proposition}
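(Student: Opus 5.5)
The plan is a spectral argument on the squared-distance matrix, compressed to the orthogonal complement of $\mathbf{1}$. Put $n=|X|$ and let $D$ be the matrix of squared distances. Write
\begin{align*}
  D=\frac{a+b}{2}(J-I)+\frac{b-a}{2}S,
\end{align*}
where $S$ is symmetric with zero diagonal, $S_{ij}=-1$ when the squared distance is $a$, and $S_{ij}=+1$ when it is $b$. Let $P=I-J/n$ be the projection onto $\mathbf{1}^\perp$. The standard Euclidean distance matrix fact is that $-\tfrac12 PDP$ is the Gram matrix of the centered points $x_i-\bar x$. It is therefore positive semidefinite, and its rank equals the affine dimension $k:=\dimn\aff(X)\le d$. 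Since $PJP=0$, dividing by $(b-a)/2$ gives that $\gamma P-T$ is positive semidefinite of rank $k$, where $T:=PSP$. Hence on the $(n-1)$-dimensional space $\mathbf{1}^\perp$, the operator $T$ has eigenvalue $\gamma$ with multiplicity $m=n-1-k$. Its remaining eigenvalues are $\lambda_1,\ldots,\lambda_k$, all less than $\gamma$.

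Next I would control the two trace moments of $T$. Put $\sigma=\mathbf{1}^TS\mathbf{1}$ and $\tau=\operatorname{tr}T=-\sigma/n$. Expanding $\norm{PSP}_F^2$ gives
\begin{align*}
  \operatorname{tr}T^2=n(n-1)-\frac{2}{n}\norm{S\mathbf{1}}^2+\frac{\sigma^2}{n^2}\le n(n-1)-\tau^2,
\end{align*}
using Cauchy--Schwarz in the form $\norm{S\mathbf{1}}^2\ge\sigma^2/n$. Cauchy--Schwarz on the $k$ remaining eigenvalues gives
\begin{align*}
  \operatorname{tr}T^2-m\gamma^2=\sum\lambda_i^2\ge\frac{(\tau-m\gamma)^2}{k}.
\end{align*}
Combining these,
\begin{align*}
  n(n-1)-m\gamma^2\ge\tau^2+\frac{(\tau-m\gamma)^2}{k}\ge\frac{m^2\gamma^2}{k+1},
\end{align*}
where the last step minimizes over $\tau$. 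Since $m+k+1=n$, this simplifies to $(n-1)(k+1)\ge(n-1-k)\gamma^2$. Using $\gamma^2>d+1\ge k+1$, this rearranges to
\begin{align*}
  n\le\frac{(k+1)(\gamma^2-1)}{\gamma^2-k-1}.
\end{align*}
The right side is strictly increasing in $k$ on $k<\gamma^2-1$; its derivative has the sign of $\gamma^2(\gamma^2-1)>0$. Setting $k=d$ therefore gives the stated bound.

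For equality, strict monotonicity in $k$ forces $k=d$, so $X$ has full affine dimension. Equality must also hold in $\norm{S\mathbf{1}}^2\ge\sigma^2/n$, so $S\mathbf{1}$ is a constant vector, and hence $D\mathbf{1}$ is constant. Place the centroid at the origin. Then the row sums are
\begin{align*}
  (D\mathbf{1})_i=n\norm{x_i}^2+\sum_j\norm{x_j}^2,
\end{align*}
so all $\norm{x_i}$ are equal and $X$ lies on a sphere centered at its centroid, which lies in $\aff(X)$. The main obstacle I expect is bookkeeping in the trace estimate: one must check that the rank of $\gamma P-T$ on $\mathbf{1}^\perp$ is exactly $k$ (rather than $k+1$), and that minimizing over the uncontrolled quantity $\tau$ loses nothing. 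The key point making this work is the decoupling of $\tau$ from $\operatorname{tr}T^2$ via the inequality $\norm{S\mathbf{1}}^2\ge\sigma^2/n$, which is also what yields sphericity in the equality case.
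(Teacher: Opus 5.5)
Your proof is correct, and it is a genuine variant of the paper's argument rather than a copy of it. Note that $\gamma P-T=PBP$, where $B=\gamma I-S=\gamma J-\frac{2}{b-a}D$ is exactly the matrix the paper uses.

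\textbf{The paper's route.} The paper keeps the full matrix $B$. Its traces are then known exactly: $\operatorname{tr}B=N\gamma$ and $\operatorname{tr}B^2=N(\gamma^2+N-1)$. The dimensional input is only an inertia bound, namely that $B$ has at most $d+1$ positive eigenvalues. This holds because the quadratic form of $B$ vanishes on $\ker G\cap\mathbf{1}^\perp$. A single Cauchy--Schwarz over the positive eigenvalues then finishes.

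\textbf{Your route.} You compress to $\mathbf{1}^\perp$. There the centered Gram identity gives positive semidefiniteness with rank exactly $k$. The cost is that the trace data now involve the uncontrolled quantities $\sigma$ and $\norm{S\mathbf{1}}^2$. That is why you need the extra inequality $\norm{S\mathbf{1}}^2\ge\sigma^2/n$, which amounts to discarding the off-diagonal block of $S$ with respect to $\bR\mathbf{1}\oplus\mathbf{1}^\perp$. It is also why you need the minimization over $\tau$.

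\textbf{What each buys in the equality case.} In your argument the extra inequality must be tight, so $D\mathbf{1}$ is constant. This gives sphericity at once, with the centroid as center, and shows in addition that the short-distance graph is regular. Full affine dimension comes from strict monotonicity of your $k$-dependent bound. The paper instead realizes the tight $B$, which is positive semidefinite of rank $d+1$, as the Gram matrix of equal-norm vectors forming a configuration similar to $X$. It reads off both full dimension and sphericity from that realization.

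\textbf{Minor points.} Your Cauchy--Schwarz over $k$ eigenvalues needs $k\ge1$; if $k=0$ then $|X|\le1$ and the bound is trivial. Also, the correct scaling is $-\frac12PDP=\frac{b-a}{4}(\gamma P-T)$, not division by $(b-a)/2$, but this is harmless.
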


\begin{proof}The assertion is immediate if $X$ is empty, so assume
$N:=|X|>0$.
Write $X=\{x_1,\ldots,x_N\}$, let
$D=(\norm{x_i-x_j}^2)_{i,j=1}^N$, and put
\begin{align*}
  B=\gamma J-\frac{2}{b-a}D.
\end{align*}
The diagonal entries of $B$ are $\gamma$, and its
off-diagonal entries are $1$ or $-1$. Hence
\begin{align*}
  \operatorname{tr}B=N\gamma,
  \quad
  \operatorname{tr}(B^2)=N(\gamma^2+N-1).
\end{align*}

We first show that $B$ has at most $d+1$ positive eigenvalues.
Let $G=(\langle x_i,x_j\rangle)_{i,j=1}^N$ and
$q=(\norm{x_1}^2,\ldots,\norm{x_N}^2)^T$. Then
\begin{align*}
  B=\frac{4}{b-a}G+\gamma J
    -\frac{2}{b-a}
      \bigl(q\mathbf{1}^T+\mathbf{1}q^T\bigr).
\end{align*}
Since $\operatorname{rank}G\le d$, the subspace
$K:=\ker G\cap\mathbf{1}^{\perp}$ has dimension at least
$N-d-1$. For every $z\in K$, the displayed formula gives
$z^TBz=0$.

If $B$ had at least $d+2$ positive eigenvalues, their
eigenvectors would span a subspace $E$ of dimension at least
$d+2$ on which $z^TBz>0$ for every nonzero $z$.
Since $\dimn E+\dimn K>N$, there would be a nonzero
$z\in E\cap K$, a contradiction.
Thus $B$ has at most $d+1$ positive eigenvalues.

Let $\lambda_1,\ldots,\lambda_p$ be the positive
eigenvalues of $B$. Since $\operatorname{tr}B=N\gamma>0$,
we have $1\le p\le d+1$. Therefore
\begin{align*}
  N^2\gamma^2
  &\le\left(\sum_{i=1}^p\lambda_i\right)^2\\
  &\le p\sum_{i=1}^p\lambda_i^2\\
  &\le(d+1)\operatorname{tr}(B^2)\\
  &=(d+1)N(\gamma^2+N-1).
\end{align*}
Rearranging gives
$N(\gamma^2-d-1)\le(d+1)(\gamma^2-1)$,
which proves the bound.

Suppose equality holds. Every inequality above is then
an equality. In particular, $B$ has no negative eigenvalues
and has rank $d+1$. We may therefore write $B$ as the
Gram matrix of vectors $v_1,\ldots,v_N\in\bR^{d+1}$
spanning $\bR^{d+1}$.
Its entries give
\begin{align*}
  \norm{v_i}^2=\gamma,
  \quad
  \norm{v_i-v_j}^2
  =\frac{4}{b-a}\norm{x_i-x_j}^2.
\end{align*}
Thus the configuration $\{v_1,\ldots,v_N\}$ is similar
to $X$ and has affine dimension at most $d$.
Since its linear span has dimension $d+1$, its affine
dimension is exactly $d$.

Let $c$ be the orthogonal projection of the origin onto
$\aff\{v_1,\ldots,v_N\}$. Then
$c\perp(v_i-c)$ for every $i$, and hence
$\norm{v_i-c}^2=\gamma-\norm{c}^2$ is independent of $i$.
The vectors $v_i$ therefore lie on a sphere in their
affine hull. The same is true of $X$.
\end{proof}

For the distance ratio
$\sqrt{\kappa-1}:\sqrt{\kappa}$, we may take
$a=2(\kappa-1)$ and $b=2\kappa$, so that
$\gamma=2\kappa-1$.
Proposition~\ref{prop:relative} consequently gives
\begin{align*}
  M_\kappa(d)
  \le
  \frac{4\kappa(\kappa-1)(d+1)}
       {4\kappa(\kappa-1)-d}
  \quad\text{if }d<4\kappa(\kappa-1).
\end{align*}
In particular,
\begin{align*}
  M_3(d)\le\frac{24(d+1)}{24-d}
  \quad(d<24).
\end{align*}

For $9\le d\le21$, this bound is strictly smaller than
$\F(d)$. Indeed, the required inequality is equivalent
to $d(24-d)>48$, and
$d(24-d)\ge63$ throughout this range.
Thus a configuration with $\kappa=3$ cannot attain
the Johnson lower bound in these dimensions.

At $d=22$, the relative bound gives $276$.
The equality statement allows us to improve this by one.

\begin{corollary}\label{cor:m3-d22}
We have $M_3(22)=275$.
\end{corollary}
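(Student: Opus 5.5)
The plan is to prove the two inequalities $M_3(22)\ge275$ and $M_3(22)\le275$ separately.

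\emph{Lower bound.} The McLaughlin construction cited in the introduction \cite[Example~4.10]{DelsarteGoethalsSeidel} is a spherical two-distance set in $\bR^{22}$ with $275$ points and LRS parameter $\kappa=3$. Its distance ratio is therefore $\sqrt2:\sqrt3$, which gives $M_3(22)\ge275$. To make this explicit, I would recall that its inner products on the unit sphere are $1/6$ and $-1/4$. The squared distances are then $2-2/6=5/3$ and $2+2/4=5/2$, whose ratio is $2:3$.

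\emph{Upper bound.} Let $X\subseteq\bR^{22}$ be a two-distance set with squared distances $a=4$ and $b=6$, so that $\gamma=5$. Since $\gamma^2=25>23=d+1$, Proposition~\ref{prop:relative} applies and gives
\begin{align*}
  |X|\le\frac{23\cdot24}{25-23}=276.
\end{align*}
It remains to exclude $|X|=276$. Suppose equality holds. By the equality clause of Proposition~\ref{prop:relative}, $X$ has affine dimension $22$ and lies on a sphere in its affine hull. Identifying $\aff(X)$ with $\bR^{22}$, $X$ is then a spherical two-distance set in $\bR^{22}$ with positive radius. The absolute bound for spherical two-distance sets \cite[Theorem~4.8]{DelsarteGoethalsSeidel} gives $|X|\le\f(22)=275$, which contradicts $|X|=276$. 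Hence $M_3(22)\le275$.

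The first bound of Lemma~\ref{lem:sphere} is stated only for squared distances $\{2,4\}$, so it cannot be used directly here. I would instead invoke the general absolute bound $\f(r)$ for spherical two-distance sets, which holds for any two distances and is the one cited in that lemma's proof.

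The only point needing care is matching the hypotheses: checking $\gamma^2>d+1$, and confirming that the equality clause really yields sphericity in full dimension $22$ rather than in a lower-dimensional affine hull. The latter is exactly what Proposition~\ref{prop:relative} asserts, so no real obstacle remains.
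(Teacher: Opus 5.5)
Your proposal is correct and follows the paper's proof: the relative bound with $\gamma=5$ gives $276$, the equality clause forces a $276$-point set to be spherical, and the spherical absolute bound $\f(22)=275$ then gives a contradiction. The lower bound comes from the McLaughlin configuration, and your added check of its inner products $1/6$ and $-1/4$ correctly confirms the ratio $\sqrt2:\sqrt3$.
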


\begin{proof}
Suppose that a two-distance set with $\kappa=3$ in
$\bR^{22}$ has $276$ points.
It attains equality in Proposition~\ref{prop:relative},
so it is spherical.
The spherical absolute bound
\cite[Theorem~4.8 and Example~4.10]{DelsarteGoethalsSeidel} gives
\begin{align*}
  |X|\le\frac{22(22+3)}2=275,
\end{align*}
a contradiction. Hence $M_3(22)\le275$.

Conversely, the $275$-point spherical two-distance set in
$\bR^{22}$ described in
\cite[Example~4.10]{DelsarteGoethalsSeidel}
has distance ratio $\sqrt{2}:\sqrt{3}$.
Hence $M_3(22)\ge275$, completing the proof.
\end{proof}

\section{Completion of the proof and the equality case}\label{sec:completion}

We first complete the proof for distance ratio $1:\sqrt{2}$.
The results of Sections~\ref{sec:connected} and~\ref{sec:disconnected} are stated in terms of affine
dimension, so we begin by passing from affine dimension to
ambient dimension.

\begin{proof}[Proof of Theorem~\ref{thm:main}]
Let $X\subseteq\bR^d$, where $d\ge9$, have distance ratio
$1:\sqrt{2}$. Rescale $X$ so that its squared distances are
$2$ and $4$, and put $r=\dimn\aff(X)$.

If $r\le8$, the Euclidean absolute bound \cite{BannaiStanton} gives
\begin{align*}
  |X|\le\binom{r+2}{2}\le45\le\F(d).
\end{align*}
If $r\ge9$, Sections~\ref{sec:connected} and~\ref{sec:disconnected} give $|X|\le\F(r)$,
regardless of whether the short-distance graph is connected.
Since $r\le d$, we again obtain $|X|\le\F(d)$.
The Johnson construction $J(d+1,2)$ attains this bound.

Suppose now that $|X|=\F(d)$ and $r=d$.
Proposition~\ref{prop:disconnected} excludes a disconnected
short-distance graph. Proposition~\ref{prop:connected}
therefore shows that the normalized configuration is congruent
to $J(d+1,2)$. Before normalization, $X$ is similar to
this construction.

For $d\ge10$, equality forces $r=d$.
Indeed, $r\le8$ would give $|X|\le45<\F(d)$, while
$9\le r<d$ would give $|X|\le\F(r)<\F(d)$.
Thus the Johnson construction is the unique extremal
configuration up to similarity in these dimensions.

In ambient dimension $9$, equality need not force full affine
dimension: Lison\v{e}k's $45$-point configuration has affine
dimension $8$ and distance ratio $1:\sqrt{2}$.
Hence the full affine-dimension assumption in the equality
statement cannot be omitted.
\end{proof}

We next allow the distance ratio to vary.

\begin{proof}[Proof of Corollary~\ref{cor:maximum}]
Let $9\le d\le22$, and let $X\subseteq\bR^d$ be a
two-distance set of maximum cardinality.
The Johnson construction gives $|X|\ge\F(d)>2d+3$.
The Larman--Rogers--Seidel theorem therefore implies that
the distance ratio of $X$ is
$\sqrt{\kappa-1}:\sqrt{\kappa}$, where
\begin{align*}
  2\le\kappa\le\frac{1+\sqrt{2d}}2.
\end{align*}
For $9\le d\le12$, this forces $\kappa=2$.
For $13\le d\le22$, the only possibilities are
$\kappa=2$ and $\kappa=3$.

Suppose $13\le d\le21$ and $\kappa=3$.
Proposition~\ref{prop:relative} gives
\begin{align*}
  |X|\le\frac{24(d+1)}{24-d}<\F(d).
\end{align*}
This contradicts the Johnson lower bound.
Consequently every maximum configuration in dimensions
$9\le d\le21$ has $\kappa=2$.
Theorem~\ref{thm:main} now gives
$M(d)=\F(d)$ throughout this range, together with uniqueness
up to similarity for $10\le d\le21$.

Finally, let $d=22$.
Theorem~\ref{thm:main} bounds the $\kappa=2$ case by
$\F(22)=253$, while Corollary~\ref{cor:m3-d22} bounds the $\kappa=3$ case by $275$.
The $275$-point configuration described there attains the
latter bound. Therefore $M(22)=275$.
\end{proof}

\section*{Acknowledgements}

The authors used ChatGPT to assist with developing and checking
parts of the arguments, improving the exposition, and preparing
the LaTeX manuscript. The authors reviewed the resulting arguments
and text and take full responsibility for the content of this paper.

\end{document}